\newtheorem{thm}{Theorem}[section]
\newtheorem{defn}[thm]{Definition}
\newtheorem{lem}[thm]{Lemma}
\newtheorem{prop}[thm]{Proposition}
\newtheorem{cor}[thm]{Corollary}
\newtheorem{rem}[thm]{Remark}
\def\Ker{\mathrm{Ker}}
\def\Q{{\mathbb Q}}
\def\C{{\mathbb C}}
\def\Der{{\rm Der}}
\def\map{{\rm map}}
\def\wl{{\rm WL}}
\def\nil{{\rm nil}}
\numberwithin{equation}{section}
\title{A model for the Whitehead product in rational mapping spaces}
\author{Takahito Naito}
\date{}
\address{Interdisciplinary Graduate School of Science and Technology, Shinshu University, 3-1-1 Asahi, Matsumoto, Nagano 390-8621, Japan}
\email{naito@math.shinshu-u.ac.jp}
\keywords{mapping space, Whitehead product, rational homotopy theory}
\subjclass[2010]{Primary 55Q15; Secondary 55P62}
\begin{document}
\maketitle

\begin{abstract}
We describe the Whitehead products in the rational homotopy group of a connected component of a mapping space in terms of the Andr${\rm \acute{e}}$-Quillen cohomology. As a consequence, an upper bound for the Whitehead length of a mapping space is given.
\end{abstract}

\section{Introduction}
We assume that all space in this paper are path connected CW-complexes with a nondegenerate base point $*$.
Let $X$ and $Y$ be simply-connected spaces and $\map(X,Y;f)$ the path component of the space of free maps from $X$ to $Y$ containing the based map $f:X\to Y$. We denote by $\Lambda V$ and $B$ a minimal Sullivan model for $Y$ and a model for $X$, respectively. Let $\overline{f}:\Lambda V \to B$ be a model for $f$ and $\Der ^{*}(\Lambda V,B;\overline{f})$ the complex of $\overline{f}$-derivations; see next section for proper definitions and details. The cohomology of $\Der ^{*}(\Lambda V,B;\overline{f})$ is called the Andr$\acute{{\rm e}}$-Quillen cohomology of $\Lambda V$ with coefficients in $B$, denoted by $H^{*}_{{\rm AQ}}(\Lambda V,B;\overline{f})$; see \cite{BL2005}.\\
\indent
Suppose that $X$ is a finite CW-complex. The $n$th rational homotopy group of $\map(X,Y;f)$ is isomorphic to $H_{{\rm AQ}}^{-n}(\Lambda V,B;\overline{f})$ as abelian groups for $n\geq 2$. This fact has been proved by Block and Lazarev \cite{BL2005}, Buijs and Murillo \cite{BM2008}, Lupton and Smith \cite{LS2007}.
Moreover Buijs and Murillo \cite{BM2008} defined a bracket in the Andr${\rm {\acute e}}$-Quillen cohomology $H_{{\rm AQ}}^{*}(\Lambda V,B;\overline{f})$ which coincides with the Whitehead product in $\pi_{*}(\map (X,Y;f))\otimes \Q$. We mention that the isomorphism due to Buijs and Murillo is constructed relying on the Sullivan model for $\map(X,Y;f)$ due to Haefliger \cite{Hae1982} and Brown and Szczarba \cite{BS1997}. To this end, the finiteness of a model $B$ for the source space $X$ is assumed in the result \cite[Theorem 1.3]{BS1997} and also \cite[\S 3]{Hae1982}.\\
\indent
On the other hand, a finiteness hypothesis of $X$ shows that $\pi_{n}(\map (X,Y;f))\otimes \Q$ is isomorphic to $\pi_{n}(\map(X,Y_{\Q};lf))$, where $l:Y\to Y_{\Q}$ the localization map; see \cite[II. Theorem 3.11]{HMR} and \cite[Theorem 2.3]{Sm1996}. Then the isomorphism constructed in \cite{BL2005} and \cite{LS2007} factors as following;
\[
\begin{xy}
(0,0)="A"*{\pi _{n}(\map (X,Y;f))\otimes \Q},
(42,0)="B"*{\pi _{n}(\map (X,Y_{\Q};lf))},
(82,0)="C"*{H_{{\rm AQ}}^{-n}(\Lambda V,B;\overline{f}).},
\ar_{\cong} "A"+/r1.8cm/;"B"+/l1.6cm/,
\ar_{\cong}^{\Phi } "B"+/r1.6cm/;"C"+/l1.3cm/
\end{xy}
\]
The proper definition of $\Phi $ is described in Section 2. By the proof of \cite[Theorem 2.1]{LS2007}, we see that the second map $\Phi $ is an isomorphism without a finiteness hypothesis of $X$. Also the assertion of \cite[Theorem 3.8]{BL2005} is that the map $\Phi $ is an isomorphism.
In this paper, we introduce a bracket in the Andr${\rm {\acute e}}$-Quillen cohomology which coincides with the Whitehead product in $\pi_{*}(\map (X,Y_{\Q};f))$ up to the isomorphism $\Phi $ without assuming that $X$ has a finite dimensional commutative model.\\
\indent
Let $X$ be a simply-connected space with a commutative model $B$ and $Y$ be a $\Q$-local, simply-connected space of finite type. Then we have a model $\overline{f}:\Lambda V\to B$ for a based map $f:X\to Y$.
Now, we define a bracket in $H_{{\rm AQ}}^{*}(\Lambda V,B;\overline{f})$ 
$$
[ \ , \ ]:H_{{\rm AQ}}^{n}(\Lambda V,B;\overline{f})\otimes H_{{\rm AQ}}^{m}(\Lambda V,B;\overline{f})\longrightarrow H_{{\rm AQ}}^{n+m+1}(\Lambda V,B;\overline{f})
$$
by
\begin{multline}\label{(1.1)}
[\varphi ,\psi ](v)=\\
(-1)^{n+m-1}\displaystyle\sum \Bigl( \displaystyle\sum_{i\neq j}(-1)^{\varepsilon _{ij}}\overline{f}(v_{1}\cdots v_{i-1})\varphi (v_{i})\overline{f}(v_{i+1}\cdots v_{j-1})\psi (v_{j})\overline{f}(v_{j+1}\cdots v_{s})\Bigr),
\end{multline}
where $v$ is a basis of $V$, $dv=\sum v_{1}v_{2}\cdots v_{s}$ and
$$ \varepsilon _{ij}=
\left\{
\begin{array}{lc}
|\varphi |(\displaystyle\sum_{k=1}^{i-1}|v_{k}|)+|\psi |(\displaystyle\sum_{k=1}^{j-1}|v_{k}|)+|\varphi ||\psi |&(i<j)\\
|\varphi |(\displaystyle\sum_{k=1}^{i-1}|v_{k}|)+|\psi |(\displaystyle\sum_{k=1}^{j-1}|v_{k}|)&(j<i)
\end{array}
\right.$$
The following is our main result of this paper.
\begin{thm}\label{thm:1.1}
The isomorphism $\Phi : \pi _{n}(\map(X,Y;f))\rightarrow H_{{\rm AQ}}^{-n}(\Lambda V,B;\overline{f})$ is compatible with the Whitehead product in $\pi _{n}(\map(X,Y;f))$ and the bracket in $H_{{\rm AQ}}^{-n}(\Lambda V,B;\overline{f})$ defined by the formula \eqref{(1.1)}.
\end{thm}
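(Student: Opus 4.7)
The plan is to translate the Whitehead product into a derivation-theoretic computation by lifting the pair $(\overline{\tilde\alpha}, \overline{\tilde\beta})$ to a Sullivan model of $S^n \vee S^m$ and composing with the model of the Whitehead attaching map $w : S^{n+m-1} \to S^n \vee S^m$. I would first recall the explicit form of $\Phi$: for $\alpha \in \pi_n(\map(X, Y; f))$ with adjoint $\tilde\alpha : X \times S^n \to Y$, any Sullivan model $\overline{\tilde\alpha} : \Lambda V \to B \otimes M(S^n)$ has the form $v \mapsto \overline{f}(v) + \varphi(v)\omega_n + (\text{higher filtration})$, with $\omega_n$ the fundamental generator in degree $n$, and $\Phi(\alpha) = [\varphi]$ is the cohomology class of the resulting $\overline{f}$-derivation of degree $-n$.

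Next, since $[\alpha, \beta] = (\alpha, \beta) \circ w$, I would construct a Sullivan lift $h : \Lambda V \to B \otimes M(S^n \vee S^m)$ of both $\overline{\tilde\alpha}$ and $\overline{\tilde\beta}$. The minimal Sullivan model of $S^n \vee S^m$ contains generators $\omega_n, \omega_m$ in degrees $n, m$, a generator $\omega_{n+m-1}$ with $d\omega_{n+m-1} = \pm\omega_n\omega_m$ corresponding to the first Whitehead bracket, and further generators from iterated Whitehead brackets; the existence of $h$ extending $\overline{\tilde\alpha}, \overline{\tilde\beta}$ follows inductively from the Sullivan lifting lemma. Writing
\[
h(v) = \overline{f}(v) + \varphi(v)\omega_n + \psi(v)\omega_m + \chi(v)\omega_{n+m-1} + \cdots
\]
and equating the $\omega_n\omega_m$-coefficients in the cocycle condition $d \circ h = h \circ d$, the left-hand side yields $\pm\chi(v)$ via $d\omega_{n+m-1} = \pm\omega_n\omega_m$, while the right-hand side, obtained by expanding $h(dv) = \sum h(v_1)\cdots h(v_s)$ from $dv = \sum v_1 \cdots v_s$ and isolating the cross-terms between $\omega_n$ and $\omega_m$, produces precisely the inner double sum of \eqref{(1.1)}, with the signs $\varepsilon_{ij}$ emerging from the Koszul rule as the symbols $\omega_n, \omega_m$ are permuted past the factors $\overline{f}(v_k)$. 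Hence $\chi \equiv \pm[\varphi, \psi]$ modulo coboundaries in $\Der^*(\Lambda V, B; \overline{f})$.

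To conclude, I would use that rationally $w$ represents the first Whitehead bracket of the generators of $\pi_*(S^n \vee S^m) \otimes \Q$, so the induced minimal-model morphism $w^* : M(S^n \vee S^m) \to M(S^{n+m-1}) = \Lambda\zeta$ sends $\omega_n, \omega_m \mapsto 0$, $\omega_{n+m-1} \mapsto \zeta$, and all further generators to $0$ for degree reasons. Composing with $h$ yields the Sullivan model $v \mapsto \overline{f}(v) + \chi(v)\zeta$ of the adjoint of $[\alpha, \beta]$, so the explicit form of $\Phi$ identifies $\Phi([\alpha, \beta]) = [\chi] = [\varphi, \psi]$ in $H^{-(n+m-1)}_{{\rm AQ}}$, which is the required compatibility.

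The main obstacle is making the construction of $h$ rigorous: the minimal model of $S^n \vee S^m$ has infinitely many generators from iterated Whitehead brackets, and the Sullivan lifting lemma must be applied inductively on all of them. The crucial simplification is that, after composition with $w^*$, only the contribution from $\omega_{n+m-1}$ survives in the target, so the higher-generator freedom in $h$ should not affect the final answer; verifying this independence carefully is the principal technical step. A secondary issue is the case of even-dimensional spheres, where the models of $S^n, S^m$ and of the wedge itself carry additional generators in degrees $2n-1, 2m-1$ killing the squares $\omega_n^2, \omega_m^2$, and these contribute extra terms to $h(dv)$ which need to be shown to vanish modulo Andr\'{e}-Quillen coboundaries.
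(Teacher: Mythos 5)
Your proposal follows essentially the same route as the paper: model the adjoint of $[\alpha,\beta]_w$ by lifting the pair of models to $M(S^{n}\vee S^{m})\otimes B$, extract the coefficient of the generator $\iota_{n+m-1}$ with $d\iota_{n+m-1}=e_{n}e_{m}$ from the cocycle condition, and push forward along the model of the universal example. The technical points you flag (normalizing away the $e_{n}e_{m}$-term so the identification holds on the nose rather than up to coboundary, and discarding the generators $e_{2n-1}$, $e_{2m-1}$ and higher-degree generators of the wedge model) are exactly what the paper's Lemma 3.3 and the ``modulo the ideal'' reduction in its proof take care of, so the argument is sound.
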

If $X$ is finite, then the bracket in $H_{{\rm AQ}}^{*}(\Lambda V,B;\overline{f})$ coincides with that due to Buijs and Murillo \cite{BM2008} up to sign. Thus Theorem \ref{thm:1.1} is regarded as a generalization of \cite[Theorem 2]{BM2008}.
Let $\map_{*}(X,Y;f)$ be the path-component of the space of based maps form $X$ to $Y$ containing the based map $f:X\to Y$. We apply the same argument to the case of the based mapping space $\map_{*}(X,Y;f)$; see the last of Section 3 for details.\\
\indent
As an application of the main result, we study the Whitehead length of a mapping space. 
{\it The Whitehead length of a space $Z$}, written $\wl (Z)$, is the length non-zero iterated Whitehead products in $\pi_{\geq 2}(Z)$.
In \cite{LS}, Lupton and Smith give some results and examples related to a Whitehead length of mapping spaces $\map(X,Y;f)$ using a Quillen model.
We will give another proof of their results using the bracket in the Andr${\rm {\acute e}}$-Quillen cohomology; see Proposition \ref{prop:4.1}.
To give an upper bound for the Whitehead length of $\map_{*}(X,Y;f)$, we introduce a numerical invariant. 
\begin{defn}[{\cite[p315]{Rat}}]
{\rm
Let $A$ be a connected graded algebra.} The product length of $A$, {\rm written $\nil A$, is the greatest integer $n$ such that $A^{+}A^{+}\cdots A^{+}\neq 0$ ($n$ factors).}
\end{defn}
In \cite{Bu2010}, Buijs proved the following theorem.
\begin{thm}[{\cite[Theorem 0.3]{Bu2010}}]\label{thm:4.5}
Let $X$ and $Y$ be simply-connected spaces with finite type over $\Q$ and $B$ a model for $X$. If $\wl (Y_{\Q})=1$, then
$$
\wl (\map_{*}(X,Y;f)_{\Q})\leq \nil B -1.
$$
\end{thm}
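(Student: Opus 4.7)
The plan is to transport the Whitehead-product computation to $H^*_{{\rm AQ}}(\Lambda V, B; \overline{f})$ via the based-mapping-space analogue of Theorem \ref{thm:1.1} (mentioned at the end of Section 3), and then to run an inductive counting argument on the formula \eqref{(1.1)}. Choose $\Lambda V$ to be the minimal Sullivan model of $Y$. Since $\wl(Y_\Q) = 1$ forces the quadratic part of $d$ on $V$ (which dualises to the rational Whitehead product on $\pi_*(Y)$) to vanish, one has $dV \subset \Lambda^{\geq 3}V$; equivalently, every monomial $v_1 v_2 \cdots v_s$ appearing in $dv$ for some $v \in V$ has $s \geq 3$. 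For the based mapping space, both $\varphi(V)$ and $\overline{f}(V)$ lie in $B^+$, because $V$ is concentrated in positive degree.

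The crux is the following claim, proved by induction on $n \geq 2$: for every length-$n$ iterated bracket $\Omega$ of based $\overline{f}$-derivations and every $v \in V$, one has $\Omega(v) \in (B^+)^{n+1}$. The base case $n = 2$ is immediate from \eqref{(1.1)}, since each summand of $[\varphi,\psi](v)$ is a product of exactly $s \geq 3$ factors in $B^+$. For the inductive step, write $\Omega = [\Omega',\Omega'']$ with $\Omega', \Omega''$ of lengths $a, b$ satisfying $a + b = n$. Each summand of $\Omega(v)$ produced by \eqref{(1.1)} has the form
\[
\overline{f}(v_1 \cdots v_{i-1})\,\Omega'(v_i)\,\overline{f}(v_{i+1} \cdots v_{j-1})\,\Omega''(v_j)\,\overline{f}(v_{j+1} \cdots v_s),
\]
with $s \geq 3$. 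The $s-2$ outer $\overline{f}$-factors contribute at least one power of $B^+$; the inductive hypothesis (or the base fact $\varphi(V) \subset B^+$ when the length is $1$) gives $\Omega'(v_i) \in (B^+)^{a+1}$ if $a \geq 2$ and $\Omega'(v_i) \in B^+$ if $a = 1$, and analogously for $\Omega''(v_j)$. A short case check over $(a \geq 2, b \geq 2)$, $(a = 1, b \geq 2)$, $(a \geq 2, b = 1)$ then shows that the total number of $B^+$-factors in every summand is at least $n+1$.

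Once the claim holds, the theorem is immediate: for $n \geq \nil B$, the ideal $(B^+)^{n+1}$ is zero, so $\Omega(v) = 0$ for every $v \in V$; since an $\overline{f}$-derivation is determined by its restriction to $V$, $\Omega$ is zero as a derivation, hence as a class in $H^*_{{\rm AQ}}(\Lambda V, B; \overline{f})$. The based analogue of the isomorphism $\Phi$ then forces the corresponding length-$n$ iterated rational Whitehead product in $\pi_*(\map_*(X,Y;f)_\Q)$ to vanish, yielding $\wl(\map_*(X,Y;f)_\Q) \leq \nil B - 1$. The main obstacle I anticipate is not the combinatorial induction itself (which is tightest precisely when one of $a, b$ equals one, where only a single extra $\overline{f}$-factor is available) but the careful invocation of the based version of Theorem \ref{thm:1.1} so that iterated brackets can be evaluated at the level of $\overline{f}$-derivations via the formula \eqref{(1.1)}; after that, everything reduces to bookkeeping with the nil-filtration of $B$.
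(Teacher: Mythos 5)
Your argument is correct, and it succeeds by the same mechanism the paper uses --- though note that the paper never proves Theorem \ref{thm:4.5} directly (it is quoted from Buijs, whose original argument goes through the Haefliger--Brown--Szczarba model); instead it proves the sharper Proposition \ref{prop:4.6} and recovers Theorem \ref{thm:4.5} in Remark \ref{rem:4.7}. Both you and the paper transport the problem to $H^{*}_{{\rm AQ}}(\Lambda V,B^{+};\overline{f})$ via Corollary \ref{cor:3.5}, use Theorem \ref{thm:4.2} to get $V=\Ker d_{1}$ from $\wl(Y_{\Q})=1$ (hence $d(V)\subset \Lambda^{\geq \omega}V$ with $\omega\geq 3$), and then count augmentation-ideal factors produced by iterating formula \eqref{(1.1)}. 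The difference is only in the bookkeeping: the paper counts all tensor factors of $d^{m-1}(v)$, getting $\nil B\geq (m-2)(\omega-1)+\omega$ and hence the stronger bound $m\leq \frac{1}{\omega-1}(\nil B-1)+1\leq \nil B-1$, whereas your induction $\Omega(v)\in (B^{+})^{n+1}$ keeps only one of the $s-2\geq 1$ outer $\overline{f}$-factors per bracketing --- exactly enough for $\nil B-1$, but discarding the refinement. (Your own recursion in fact gives $\Omega(v)\in (B^{+})^{2n-1}$ if you retain that factor at every stage, which recovers the paper's bound for $\omega=3$.) The only step worth making explicit is the identification $\pi_{*}(\map_{*}(X,Y;f)_{\Q})\cong \pi_{*}(\map_{*}(X,Y_{\Q};lf))$ compatibly with Whitehead products, which is needed before Corollary \ref{cor:3.5} applies; this is the same reduction the paper performs throughout.
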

Using the bracket in the Andr${\rm {\acute e}}$-Quillen cohomology, we can prove the following proposition, which refines the above result in the case when the source space of the mapping space is finite CW complex; see Remark \ref{rem:4.7}.
\begin{prop}\label{prop:4.6}
Let  $X$ and $Y$ be simply-connected spaces with finite type over $\Q$ and $B$ a model for $X$. Assume further that $Y$ is $\Q$-local. Then, we have
$$\wl (\map_{*}(X,Y;f))\leq \nil B.$$
Moreover, if $\wl(Y)=1$ and $\nil B\geq 2$, then
$$
\wl (\map_{*}(X,Y;f))\leq \frac{1}{\omega  -1}(\nil B -1)+1,
$$
where $\omega  = {\rm min}\{ n\geq 2 \ | \ d(V)\subset \Lambda ^{\geq n} V\}$.
\end{prop}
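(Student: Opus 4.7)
The plan is to invoke the based analog of Theorem \ref{thm:1.1} (mentioned at the end of Section 3) to translate iterated Whitehead products in $\pi_{*}(\map_{*}(X,Y;f))$ into iterated brackets computed by the based version of formula \eqref{(1.1)}, and then to estimate how deeply such brackets land in the $B^{+}$-adic filtration of $B$. Because $f$ is based, $\overline{f}:\Lambda V \to B$ preserves augmentations, so $\overline{f}(V)\subset B^{+}$; and the relevant representatives $\varphi$ of classes in $\pi_{*}(\map_{*}(X,Y;f))$ are $\overline{f}$-derivations with $\varphi(V)\subset B^{+}$.

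The heart of the argument is the following inductive estimate: for any $k$-fold iterated bracket $\Psi_{k}$ of such derivations,
\[
\Psi_{k}(v)\in (B^{+})^{(k-1)(\omega-1)+1}\qquad\text{for every }v\in V.
\]
The case $k=1$ is immediate. For the inductive step, write $\Psi_{k}=[\Psi_{a},\Psi_{b}]$ with $a+b=k$, and apply \eqref{(1.1)} to $v\in V$ with $dv=\sum v_{1}\cdots v_{s}$, where each monomial has length $s\geq \omega$. Each summand has the form
\[
\pm\,\overline{f}(v_{1})\cdots\overline{f}(v_{i-1})\,\Psi_{a}(v_{i})\,\overline{f}(v_{i+1})\cdots\overline{f}(v_{j-1})\,\Psi_{b}(v_{j})\,\overline{f}(v_{j+1})\cdots\overline{f}(v_{s}),
\]
which lies in $(B^{+})^{s-2}\cdot (B^{+})^{(a-1)(\omega-1)+1}\cdot (B^{+})^{(b-1)(\omega-1)+1}$ by the inductive hypothesis together with $\overline{f}(V)\subset B^{+}$. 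Since $s\geq \omega$, the total exponent of $B^{+}$ is at least $\omega+(k-2)(\omega-1)=(k-1)(\omega-1)+1$, as claimed. In particular the bracket of two augmentation-preserving derivations is again augmentation-preserving, so the induction propagates.

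Once $(k-1)(\omega-1)+1>\nil B$, the estimate forces $\Psi_{k}=0$ in the derivation complex, hence the corresponding $k$-fold Whitehead product vanishes through $\Phi$. For the first bound we use only $\omega\geq 2$, which holds because $\Lambda V$ is minimal; the vanishing condition reduces to $k>\nil B$, giving $\wl(\map_{*}(X,Y;f))\leq \nil B$. For the refined bound, the hypothesis $\wl(Y)=1$ means the rational homotopy Lie algebra of $Y$ is abelian, and by the classical correspondence between the quadratic part $d_{2}:V\to \Lambda^{2}V$ of the minimal-model differential and the Whitehead bracket, this forces $d_{2}=0$, so $\omega\geq 3$. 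The condition $(k-1)(\omega-1)+1>\nil B$ then rearranges to $k>\tfrac{\nil B-1}{\omega-1}+1$, yielding the desired inequality.

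The main obstacle will be the bookkeeping for the based analog of \eqref{(1.1)}, verifying the sign conventions and that the bracket genuinely restricts to augmentation-preserving derivations so that the inductive claim does propagate along iterated brackets of arbitrary parenthesization, together with invoking the implication $\wl(Y)=1 \Rightarrow d_{2}=0$ in a manner consistent with the chosen minimal model of $Y$.
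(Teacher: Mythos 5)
Your proposal is correct and follows essentially the same route as the paper: translate iterated Whitehead products into iterated brackets of $B^{+}$-valued derivations via Corollary \ref{cor:3.5}, and use formula \eqref{(1.1)} together with $d(V)\subset\Lambda^{\geq\omega}V$ to show a $k$-fold bracket lands in $(B^{+})^{(k-1)(\omega-1)+1}$, which is exactly the paper's bound $\nil B\geq(m-2)(\omega-1)+\omega$. The only cosmetic differences are that you run the induction over arbitrary parenthesizations (the paper writes only nested brackets) and you deduce $\omega\geq 3$ from the quadratic-part/Whitehead-product correspondence rather than quoting Theorem \ref{thm:4.2} directly; both are sound.
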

We will also compute the Whitehead length of mapping spaces $\map (\C P^{\infty}\times \C P^{m}, \C P^{\infty}_{\Q}\times \C P^{n}_{\Q};f)$.\\
\indent
The organization of this paper is as follows. In Section 2, we will recall several
fundamental results on rational homotopy theory. The isomorphism $\Phi $ in \cite{BL2005} and \cite{LS2007} is also described. In Section 3, we prove Theorem \ref{thm:1.1}. To this end, a model for the Whitehead product of mapping spaces will be constructed in the section. The Whitehead length of mapping spaces is considered in Section 4. A computational example of the Whitehead length is presented in Section 5.

\section{Preliminaries}
We refer the reader to the book \cite{Rat} for the fundamental facts on rational homotopy theory.
A {\it Sullivan algebra} is a free commutative differential graded algebra over the field of rational numbers $\Q$ (or simply CDGA in this paper), $(\Lambda V ,d)$, with a $\Q$-graded vector space $V=\bigoplus _{i\geq 1}V^{i}$ where $V$ has an increasing sequence of subspaces $V(0)\subset V(1)\subset \cdots $ which satisfy the conditions that $V=\bigcup_{i\geq 0}V(i)$, $d=0$ in $V(0)$ and $d:V(i)\to \Lambda V(i-1)$ for any $i\geq 1$.\\
\indent
We recall a {\it minimal Sullivan model} for a simply-connected space $X$ with finite type. It is a Sullivan algebra of the form $(\Lambda V,d)$ with $V=\bigoplus _{i\geq 2}V^{i}$ where each $V^{i}$ is of finite dimension and $d$ is decomposable; that is, $d(V)\subset \Lambda ^{\geq 2}V$. 
Moreover, $(\Lambda V,d)$ is equipped with a quasi-isomorphism $(\Lambda V,d) \stackrel{\simeq }{\longrightarrow } A_{{\rm PL}}(X)$ to the CDGA $A_{{\rm PL}}(X)$ of differential polynomial forms on $X$. Observe that, as algebras, $H^{*}(\Lambda V,d)\cong H^{*}(A_{{\rm PL}}(X))\cong H^{*}(X;\Q)$. 
For instance, a minimal Sullivan model for the $n$-sphere $S^{n}$, $M(S^{n})$, is the form $(\Lambda (e_{n}),0)$ if $n$ is odd and $(\Lambda (e_{n},e_{2n-1}),de_{2n-1}=e_{n}^{2})$ if $n$ is even, where $|e_{n}|=n$ and $|e_{2n-1}|=2n-1$. \\
\indent
A {\it model} for a space $X$ is a connected CDGA $(B,d)$ such that there is a quasi-isomorphism from a minimal Sullivan model for $X$ to $B$.
The two maps of CDGA $\varphi_{1}$ and $\varphi _{2}$ from a Sullivan algebra $\Lambda V$ to a CDGA $A$ are {\it homotopic} if there exists a CDGA map $H:\Lambda V\to A\otimes \Lambda (t,dt)$ such that $(1\cdot \varepsilon _{i})H=\varphi _{i}$ for $i=0,1$. Here, $\Lambda (t,dt)$ is the free CDGA with $|t|=0$, $|dt|=1$ and the differential $d$ of $\Lambda (t,dt)$ sends $t$ to $dt$. The map $\varepsilon _{i}:\Lambda (t,dt)\to \Q$ defined by $\varepsilon _{i}(t)=i$. Denote $[\Lambda V,A]$ by the set of homotopy classes of CDGA maps from $\Lambda V$ to $A$. \\
\indent
Let $f:X\to Y$ be a map between spaces of finite type. Then there exists a CDGA map $\widetilde{f}$ from a minimal Sullivan model $(\Lambda V_{Y},d)$ for $Y$ to a minimal Sullivan model $(\Lambda V_{X},d)$ for $X$ which makes the diagram
\[\xymatrix{
A_{{\rm PL}}(Y) \ar[rr]^{A_{{\rm PL}}(f)} & & A_{{\rm PL}}(Y)\\
\Lambda V_{Y} \ar[u]^{\simeq } \ar[rr]_{\widetilde{f}} & & \Lambda V_{X} \ar[u]_{\simeq}
}\]
commutative up to homotopy. Let $\rho :\Lambda V_{X}\stackrel{\simeq}{\longrightarrow }B$ a model for $X$, we call $\rho \widetilde{f}$ a model for $f$ associated with models $\Lambda V_{Y}$ and $B$ and denote it by $\overline{f}$.\\
\indent 
We use the following result when constructing a model for the Whitehead product of a mapping space.
\begin{prop}[{\cite[Proposition 12.9]{Rat}}] \label{liftting lemma}
Let $A$ and $C$ be CDGAs, $\Lambda V$ a Sullivan algebra and $\pi:A\to C$ a quasi-isomorphism.
Then the map
$$
\pi _{*}:[\Lambda V,A]\longrightarrow [\Lambda V,C]
$$
induced by $\pi$ is bijective.
\end{prop}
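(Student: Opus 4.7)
The plan is to establish both surjectivity and injectivity of $\pi_*$ using the standard Sullivan lifting technique, exploiting the inductive filtration $V(0)\subset V(1)\subset\cdots$ of $V$.

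First, I would reduce to the case in which $\pi:A\to C$ is a \emph{surjective} quasi-isomorphism. This can be arranged by replacing $A$ with the pullback $\widetilde{A}:=A\times_C (C\otimes\Lambda(t,dt))$ along the projection $\varepsilon_0:C\otimes\Lambda(t,dt)\to C$; the induced map $\widetilde{A}\to C$ obtained via $\varepsilon_1$ is a surjective quasi-isomorphism, and the projection $\widetilde{A}\to A$ is a quasi-isomorphism that is bijective on homotopy classes out of $\Lambda V$ (because its image gives a path from the original lift). So without loss of generality, assume $\pi$ is surjective with acyclic kernel $K=\ker\pi$.

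For surjectivity of $\pi_*$, I would inductively construct a strict lift $\widetilde{\psi}:\Lambda V\to A$ of any given $\psi:\Lambda V\to C$. Suppose $\widetilde{\psi}$ is defined on $\Lambda V(n-1)$ with $\pi\widetilde{\psi}=\psi|_{\Lambda V(n-1)}$. Choose a basis element $v\in V(n)$; since $dv\in\Lambda V(n-1)$, the element $\widetilde{\psi}(dv)\in A$ is a cocycle, and $\pi(\widetilde{\psi}(dv))=\psi(dv)=d\psi(v)$. Surjectivity of $\pi$ produces some $a\in A$ with $\pi(a)=\psi(v)$; then $da-\widetilde{\psi}(dv)\in K$ is a cocycle in the acyclic complex $K$, hence equals $db$ for some $b\in K$. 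Setting $\widetilde{\psi}(v):=a-b$ extends the lift over $v$, and doing this on a basis extends it over $V(n)$. This is the inductive heart of the proof, and it is the only place where the quasi-isomorphism hypothesis is actually consumed.

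For injectivity, suppose $\varphi_0,\varphi_1:\Lambda V\to A$ satisfy $\pi\varphi_0\simeq\pi\varphi_1$ via a homotopy $H:\Lambda V\to C\otimes\Lambda(t,dt)$. I would lift $H$ to $\widetilde{H}:\Lambda V\to A\otimes\Lambda(t,dt)$ with $(\varepsilon_i\otimes 1)\widetilde{H}=\varphi_i$ for $i=0,1$. This is again a surjectivity-of-lifting problem: the map $(\pi\otimes 1,\varepsilon_0\otimes 1,\varepsilon_1\otimes 1):A\otimes\Lambda(t,dt)\to (C\otimes\Lambda(t,dt))\times_{C\times C}(A\times A)$ is a surjective quasi-isomorphism, so the same inductive argument over $V(n)$ produces $\widetilde{H}$.

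The main obstacle is bookkeeping in the inductive step—verifying that at each stage the obstruction really does live in the acyclic kernel and that the chosen adjustment $b$ does not disturb the values already fixed on $\Lambda V(n-1)$; once this is set up carefully (using the derivation property and a basis of $V(n)$ relative to $V(n-1)$), both halves of the statement follow from essentially the same lifting lemma applied to $\pi$ and then to its cylinder version.
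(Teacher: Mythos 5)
Your argument is correct and is essentially the standard proof of \cite[Proposition 12.9]{Rat}, which the paper itself only cites rather than proves; in particular, your inductive construction of a lift through a surjective quasi-isomorphism (choosing $a$ with $\pi(a)=\psi(v)$ and correcting by a boundary in the acyclic kernel) is exactly the step the paper recalls in Remark~\ref{rem:2.2}, and your reduction to the surjective case and homotopy-lifting argument for injectivity follow the same route as the cited source. Nothing further is needed.
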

\begin{rem}\label{rem:2.2}{\rm
If $\pi $ is a surjective quasi-isomorphism, we can construct a CDGA map $\phi :\Lambda V\to A$ such that $\pi \phi =\psi$ for any CDGA map $\psi:\Lambda V\to C$ by induction on a degree of $V$ (\cite[Lemma 12.4]{Rat}). Assume that $\phi d(v)$ is defined for any basis $v$ in $V$. Since $\pi $ is a surjective quasi-isomorphism and $\pi \phi d(v)=d\psi (v)$, we can find $a\in A$ such that $d(a)=\phi d(v)$ and $\pi (a)=\psi (v)$. Then, we extend $\phi $ with $\phi (v)=a$.
}\end{rem}
\indent
We next recall the definition of the Whitehead product. Let $\alpha \in \pi_{n}(X)$ and $\beta \in \pi_{m}(X)$ be elements represented by $a:S^{n}\to X$ and $b:S^{m}\to X$, respectively. Then the Whitehead product $[\alpha ,\beta ]_{w}$ is defined to be the homotopy class of composite
\[
\begin{xy}
(-25,0)="A"*{S^{n+m-1}}, (0,0)="B"*{S^{n}\vee S^{m}}, (28,0)="C"*{X},
\ar ^{\eta}"A"+/r0.7cm/;"B"+/l0.7cm/, \ar ^{\nabla (a\vee b)}"B"+/r0.7cm/;"C"+/l0.3cm/
\end{xy}
\]
where $\eta$ is the universal example and $\nabla :X\vee X\to X$ is the folding map.
Recall that the differential $d$ of $\Lambda V$ can be written by $d=\sum _{i\geq 1}d_{i}$ with $d_{i}(V)\subset \Lambda ^{i+1}V$. The map $d_{1}$ is called the {\it quadratic part} of $d$.
We see that the quadratic part $d_{1}$ is related with the Whitehead products in $\pi_{*}(X)$.
We denote by $Q(g)^{n}:V^{n}\to \Q e_{n}$ the linear part of a model for $g$, $\overline{g}:\Lambda V\to M(S^{n})$. Define a paring and a trilinear map
\begin{align*}
&\langle \ ; \ \rangle :V\times \pi _{*}(X) \longrightarrow \Q ,\\
&\langle \ ; \ , \ \rangle :\Lambda ^{2}V\times \pi _{*}(X) \times \pi _{*}(X) \longrightarrow \Q 
\end{align*}
by
$$
\langle v ; \alpha  \rangle e_{n}=
\left\{
\begin{array}{cc}
Q(g)^{n}v&(| v |=n)\\
0&(| v | \neq n)
\end{array}
\right.
$$
and
$$
\langle vw;\alpha ,\beta \rangle=\langle v;\alpha \rangle \langle w;\beta \rangle +(-1)^{| w | | \alpha |} \langle w;\alpha \rangle \langle v;\beta \rangle ,
$$
respectively.
\begin{prop}[{\cite[Proposition 13.16]{Rat}}] \label{w.p}
The following holds
$$
\langle d_{1}v;\alpha ,\beta \rangle=(-1)^{n+m-1}\langle v;[\alpha ,\beta ]_{w}\rangle ,
$$
where $v\in V$, $\alpha \in \pi_{n}(X)$, $\beta \in \pi_{m}(X)$.
\end{prop}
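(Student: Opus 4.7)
The plan is to unpack the definition of the Whitehead product as the composite $S^{n+m-1} \xrightarrow{\eta} S^n \vee S^m \xrightarrow{\nabla(a\vee b)} X$ and compute its effect on models, reading off the linear part. The key input is an explicit low-degree minimal Sullivan model of $S^n \vee S^m$; through degree $n+m-1$ it is generated by $u_n \in M(S^n)$, $u_m \in M(S^m)$ (both closed) together with a class $y$ in degree $n+m-1$ satisfying $d y = u_n u_m$, since the reduced rational cohomology of $S^n \vee S^m$ is concentrated in degrees $n$ and $m$ and the product $u_n u_m$ must be killed.

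First I would identify a model $\overline{\eta} : M(S^n\vee S^m) \to M(S^{n+m-1})$ for the universal Whitehead map. By degree reasons $\overline{\eta}(u_n)=\overline{\eta}(u_m)=0$, so $\overline{\eta}(y)$ is forced to be a cocycle of degree $n+m-1$, hence a scalar multiple of $e_{n+m-1}$; one checks that with the orientation convention used in the definition of $\eta$ this scalar is $(-1)^{n+m-1}$ (this is exactly the place where the sign in Proposition \ref{w.p} is born). Next I would model $\nabla(a\vee b)$ by a CDGA map $\overline{\nabla(a\vee b)} : \Lambda V \to M(S^n\vee S^m)$, using Proposition \ref{liftting lemma} (and the procedure of Remark \ref{rem:2.2}) to extend degree-by-degree. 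On a basis element $v\in V$ of degree $n$ or $m$, composition with the inclusions $S^n,S^m\hookrightarrow S^n\vee S^m$ shows that $\overline{\nabla(a\vee b)}(v)=\langle v;\alpha\rangle u_n+\langle v;\beta\rangle u_m$, after reducing to the relevant summand using the definition of $Q(g)^n$.

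Now I would take $v\in V^{n+m-1}$, write $dv = d_1 v + d_{\geq 2} v$, and lift $\overline{\nabla(a\vee b)}$ through $v$. Since only $u_n,u_m$ live in degrees below $n+m-1$ in the model of $S^n\vee S^m$, the image of $d_{\geq 2}v$ produces monomials of length $\geq 3$ in $u_n,u_m$, which are exact via ``higher'' generators and do not feed into the $y$-coefficient. Only the quadratic part $d_1 v = \sum v_i v_j$ contributes a $u_n u_m$-term to $\overline{\nabla(a\vee b)}(dv)$, and a direct expansion using the bilinear formula for $\langle\,\cdot\,;\alpha,\beta\rangle$ gives exactly
\[
\overline{\nabla(a\vee b)}(d_1 v) = \langle d_1 v;\alpha,\beta\rangle\, u_n u_m + (\text{terms in } \mathbb{Q}u_n^2,\mathbb{Q}u_m^2).
\]
Since $dy = u_n u_m$, one may take $\overline{\nabla(a\vee b)}(v) = \langle d_1 v;\alpha,\beta\rangle\, y + (\text{cocycle} + \text{terms killing }u_n^2,u_m^2\text{ contributions})$; applying $\overline{\eta}$ kills all but the $y$-term, yielding $\langle d_1 v;\alpha,\beta\rangle\cdot(-1)^{n+m-1}e_{n+m-1}$. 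By the definition of $Q([a,b]_w\circ\eta)$ this coefficient is precisely $(-1)^{n+m-1}\langle v;[\alpha,\beta]_w\rangle$, proving the formula.

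The main obstacle is the bookkeeping: I need to justify that only the quadratic part of $d$ matters, handle the case $n=m$ (where the images of $v\in V^n$ include both $u_n$ and $u_m$ contributions and the symmetrized formula for $\langle vw;\alpha,\beta\rangle$ with its sign $(-1)^{|w||\alpha|}$ is essential), and track the orientation sign produced by $\overline{\eta}$. Once those sign conventions are pinned down, the argument is essentially a calculation dictated by $dy=u_n u_m$ combined with the fact that in a minimal model the quadratic differential detects the coproduct on indecomposables.
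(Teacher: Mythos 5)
The paper does not actually prove this statement: it is quoted verbatim from F\'elix--Halperin--Thomas \cite[Proposition 13.16]{Rat}, so there is no in-paper argument to compare against. Your sketch reconstructs the standard proof from that source: pass to the minimal model of $S^{n}\vee S^{m}$, observe that through degree $n+m-1$ the only generators are $e_{n},e_{m}$ (plus $e_{2n-1},e_{2m-1}$ when they occur) and $\iota_{n+m-1}$ with $d\iota_{n+m-1}=e_{n}e_{m}$, note that word-length considerations force only $d_{1}v$ to contribute to the $e_{n}e_{m}$-coefficient, and read off the $\iota_{n+m-1}$-coefficient of $\overline{\nabla(a\vee b)}(v)$ before applying $\overline{\eta}$. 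That structure is correct, and you rightly isolate the two delicate points (the $n=m$ case and the sign conventions).

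The one genuine gap is your treatment of $\overline{\eta}(\iota_{n+m-1})$. Degree reasons give only that $\overline{\eta}(e_{n})=\overline{\eta}(e_{m})=0$ and hence that $\overline{\eta}(\iota_{n+m-1})=\lambda e_{n+m-1}$ for \emph{some} scalar $\lambda$; nothing in your argument shows $\lambda\neq 0$, and if $\lambda$ were $0$ the whole computation would collapse to $0=0$ and prove nothing. Establishing $\lambda=(-1)^{n+m-1}$ (or at least $\lambda\neq 0$) is the real content: it requires identifying $\eta$ as the attaching map of the top cell of $S^{n}\times S^{m}$ and comparing the relative Sullivan model of the cofibration $S^{n}\vee S^{m}\hookrightarrow S^{n}\times S^{m}$ with the fact that $e_{n}e_{m}\neq 0$ in $H^{*}(S^{n}\times S^{m};\Q)$ (this is how \cite{Rat} proceeds, and it is exactly the step the present paper invokes in the proof of Theorem \ref{thm:1.1} when it asserts $\overline{\eta}(\iota_{n+m-1})=(-1)^{n+m-1}e_{n+m-1}$). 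Writing ``one checks that the scalar is $(-1)^{n+m-1}$'' defers precisely the nontrivial input. A secondary, smaller issue: in the case $n=m$ you also need $\overline{\eta}(e_{2n-1})=0$ for the generators $e_{2n-1}\in M(S^{n})$, $e_{2m-1}\in M(S^{m})$, which sit in the same degree $n+m-1$; this follows from naturality of the Whitehead product under the two collapse maps $S^{n}\vee S^{n}\to S^{n}$, but it should be said, since otherwise those generators could contaminate the coefficient of $e_{n+m-1}$ after applying $\overline{\eta}$.
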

We conclude this section by recalling the isomorphism $\Phi $ defined in \cite{BL2005} and \cite{LS2007} from $\pi_{n}(\map(X,Y;f))$ to $H_{{\rm AQ}}^{-n}(\Lambda V,B;\overline{f})$ in the setting of a simply-connected space $X$ and a $\Q$-local, simply-connected space $Y$ with finite type.
We here recall the complex of $\overline{f}$-derivations from $\Lambda V$ to B which denoted by $\Der^{*}(\Lambda V,B;\overline{f})$. An element $\theta \in \Der^{n}(\Lambda V,B;\overline{f})$ is a $\Q$-linear map of degree $n$ with $\theta (xy)=\theta (x)\overline{f}(y)+(-1)^{n|x|}\overline{f}(x)\theta (y)$ for any $x,y\in \Lambda V$. The differentials $\partial :\Der ^{n}(\Lambda V,B;\overline{f})\to \Der^{n+1}(\Lambda V,B;\overline{f})$ are defined by $\partial (\theta )=d\theta -(-1)^{n}\theta d$.\\
\indent
Let $\alpha \in \pi_{n}(\map(X,Y;f))$ and $g:S^{n}\times X\to Y$ the adjoint of $\alpha $. We note that $g$ satisfy $g|_{X}=f$. Then there exists a model $\overline{g}:\Lambda V\to M(S^{n})\otimes B$ for $g$ such that the following diagram is strictly commutative;
\[
\begin{xy}
(0,0)="A"*{\Lambda V},
(30,0)="B"*{M(S^{n})\otimes B},
(15,-12)="C"*{B,},
\ar^{\overline{g}} "A"+/r0.3cm/;"B"+/l1cm/,
\ar_{\overline{f}} "A"+/d0.2cm/+/r0.2cm/;"C"+/u0.2cm/+/l0.2cm/,
\ar^{\varepsilon \cdot 1} "B"+/d0.2cm/+/l0.2cm/;"C"+/u0.2cm/+/r0.2cm/
\end{xy}
\]
where $\varepsilon :M(S^{n})\to \Q$ is the augmentation; see Lemma \ref{lem:3.1}. Since $S^{n}$ is formal, there is a quasi-isomorphism $\phi :M(S^{n})\to (H^{*}(S^{n};\Q),0)$ and, for any $v\in \Lambda V$, we may write
$$
(\phi \otimes 1 )\overline{g}(v)=1\otimes \overline{f}(v)+e_{n}\otimes \theta (v).
$$
Then we see that $\theta $ is a $\overline{f}$-derivation of degree $-n$ and also a cycle in $\Der^{*}(\Lambda V,B;\overline{f})$. Put $\Phi (\alpha ) = \theta $.
\begin{thm}[{\cite[Theorem 3.8]{BL2005} \cite[Theorem 2.1]{LS2007} }]\label{thm:2.3}
The map
$$
\Phi :\pi _{n}(\map(X,Y;f)) \longrightarrow  H_{{\rm AQ}}^{-n}(\Lambda V,B;\overline{f})\\
$$
is an isomorphism of abelian groups for any $n\geq 2$.
\end{thm}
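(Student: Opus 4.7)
The plan is to construct an explicit inverse $\Psi: H_{{\rm AQ}}^{-n}(\Lambda V,B;\overline{f}) \to \pi_n(\map(X,Y;f))$ to $\Phi$ and then verify that $\Phi$ carries the additions on the two sides into each other. For the well-definedness of $\Phi$, two strictly commuting lifts $\overline{g}, \overline{g}'$ of the adjoint $g$ are homotopic over $B$ by Proposition \ref{liftting lemma}; applying $\phi\otimes 1$ to such a homotopy $\Lambda V\to M(S^n)\otimes B\otimes \Lambda(t,dt)$ and reading off the $e_n$-coefficient produces a chain homotopy between the associated derivations, so the class of $\theta$ in $H_{{\rm AQ}}^{-n}$ depends only on $\alpha$.

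To construct $\Psi$, I would start from a cocycle $\theta\in \Der^{-n}(\Lambda V,B;\overline{f})$ and define a graded algebra map $\tilde g:\Lambda V\to H^*(S^n;\Q)\otimes B$ on generators by
\[
\tilde g(v)=1\otimes \overline{f}(v)+e_n\otimes \theta(v).
\]
The derivation property of $\theta$ together with $e_n^2=0$ ensures a consistent multiplicative extension, and the cocycle relation $d\theta=(-1)^n\theta d$ combined with $\overline{f}$ being a chain map yields $d\tilde g=\tilde g d$. The projection $\phi:M(S^n)\to H^*(S^n;\Q)$ is a surjective quasi-isomorphism, so $\phi\otimes 1$ is too, and Remark \ref{rem:2.2} produces a CDGA lift $\overline g:\Lambda V\to M(S^n)\otimes B$ of $\tilde g$, for which the augmentation condition $(\varepsilon\otimes 1)\overline g=\overline{f}$ is automatic from the shape of $\tilde g$. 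Since $Y$ is $\Q$-local of finite type, the Sullivan correspondence converts the homotopy class $[\overline g]$ into a homotopy class of maps $g:S^n\times X\to Y$ with $g|_X=f$, whose adjoint $S^n\to \map(X,Y;f)$ is $\Psi(\theta)$.

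The composite $\Phi\circ\Psi$ is the identity by inspection of the $e_n$-coefficient, while $\Psi\circ\Phi=\mathrm{id}$ follows from the uniqueness-up-to-homotopy of lifts in Proposition \ref{liftting lemma} combined with the Sullivan correspondence. The step I expect to be the main obstacle is the additivity of $\Phi$: the sum $\alpha+\alpha'$ in $\pi_n(\map(X,Y;f))$ is realized by precomposing with the pinch map $S^n\to S^n\vee S^n$, which on $H^*(S^n;\Q)$ corresponds to $e_n\mapsto e_n\otimes 1+1\otimes e_n$. Choosing compatible lifts of the pinch through $M(S^n\vee S^n)$ and tracing the $e_n$-coefficient through $\phi\otimes 1$ should exhibit $\Phi(\alpha+\alpha')$ as $\theta+\theta'$ up to a coboundary; the technical point is to verify that the corrections arising from the non-trivial differential in $M(S^n)$ (present when $n$ is even, coming from $de_{2n-1}=e_n^2$) contribute only exact terms in $\Der^{-n}(\Lambda V,B;\overline{f})$.
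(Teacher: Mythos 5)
This statement is one the paper itself does not prove: Theorem \ref{thm:2.3} is imported verbatim from \cite{BL2005} and \cite{LS2007}, and the surrounding text only recalls the construction of $\Phi$. So there is no in-paper argument to compare against; what your sketch reconstructs is essentially the strategy of \cite[Theorem 2.1]{LS2007} (explicit inverse via realization of Sullivan models, plus a pinch-map argument for additivity), rather than the more abstract model-category route of \cite{BL2005}. As an outline it is sound: the identification of $\partial\theta=0$ with $\tilde g$ being a chain map is correct, $\phi\otimes 1$ is indeed a surjective quasi-isomorphism so Remark \ref{rem:2.2} applies, and the augmentation condition does descend through the lift.

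Three points are thinner than they should be. First, you check that $\Phi$ is well defined but not that $\Psi$ is: you must also show that a coboundary $\theta=\partial\sigma$ yields a map $g$ homotopic rel $X$ to the trivial one, which is the converse direction of your homotopy-to-chain-homotopy argument. Second, ``homotopic over $B$'' is doing real work: Proposition \ref{liftting lemma} only gives a free homotopy $H$ between two lifts, and composing $H$ with $\varepsilon\cdot 1\cdot 1$ may be a nontrivial self-homotopy of $\overline{f}$, in which case the $e_n$-coefficient of $H$ is not literally a chain homotopy in $\Der^{*}(\Lambda V,B;\overline{f})$; one needs a relative (rel the augmentation to $B$) version of the lifting lemma, which is where Lemma \ref{lem:3.1}-type normalizations enter. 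Third, identifying $\pi_n(\map(X,Y;f))$ with homotopy classes of maps $S^n\times X\to Y$ restricting to $f$, \emph{modulo homotopies rel $\{*\}\times X$}, requires a based/relative form of the Sullivan realization correspondence $[Z,Y]\cong[\Lambda V,A_{\mathrm{PL}}(Z)]$; this is where the hypotheses that $Y$ is $\Q$-local, simply connected and of finite type (and, crucially, no finiteness on $X$) are actually used, and it deserves an explicit citation rather than the phrase ``Sullivan correspondence.'' None of these is a fatal flaw, but each is a place where the cited proofs spend genuine effort.
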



\section{A model for the adjoint of the Whitehead product}

We retain the notation and terminology described in the previous section.
In order to consider the image of the Whitehead product in $\pi_{*}\map(X,Y;f)$ by the isomorphism $\Phi$, we construct an appropriate model for the adjoint of the Whitehead product. This is the key to proving Theorem \ref{thm:1.1}.
Let $X$ be a simply-connected space, $Y$ a $\Q$-local, simply-connected space of finite type and  $f:X\to Y$ a based map.
We denote by $(\Lambda V,d)$ and $(B,d)$ a minimal Sullivan model for $Y$ and a model for $X$, respectively. Let $\overline{f}:\Lambda V\to B$ be a model for $f$ associated with such the models.\\
\indent
We prepare for proving Theorem \ref{thm:1.1}. We see that a minimal Sullivan model for $S^{n}\vee S^{m}$ has the form
$$
M(S^{n}\vee S^{m})=(M(S^{n})\otimes M(S^{m})\otimes \Lambda (\iota_{n+m-1},x_{1},x_{2},\cdots ),d)
$$
in which $d\iota_{n+m-1}=e_{n}e_{m}$ and $|\iota_{n+m-1}|=n+m-1 < |x_{i}|$ for any $i\geq 1$; see \cite[p177]{Rat}.


\begin{lem}\label{lem:3.1}
Let $g:S^{n}\times X\longrightarrow Y$ be a map with $g|_{X}=f$. Then there exists a model $\overline{g}$ for $g$ such that the diagram is strictly commutative;
\[
\begin{xy}
(0,0)="A"*{\Lambda V},
(40,0)="B"*{M(S^{n})\otimes B},
(20,-10)="C"*{B,},
\ar^{\overline{g}} "A"+/r0.3cm/;"B"+/l1cm/,
\ar_{\overline{f}} "A"+/d0.2cm/+/r0.2cm/;"C"+/u0.2cm/+/l0.2cm/,
\ar^{\varepsilon \cdot 1} "B"+/d0.2cm/+/l0.2cm/;"C"+/u0.2cm/+/r0.2cm/
\end{xy}
\]
where $\varepsilon :M(S^{n})\to \Q$ is the augmentation. Moreover, if $g$ satisfy $g|_{X}=f$ and $g|_{S^{n}}=*$, where $*:S^{n}\to Y$ is the constant map to the base point, then there is a model $\overline{g}$ for $g$ such that the following diagram commute strictly;
\[
\begin{xy}
(20,10)="D"*{M(S^{n})},
(0,0)="A"*{\Lambda V},
(40,0)="B"*{M(S^{n})\otimes B},
(20,-10)="C"*{B,},
\ar^{\overline{g}} "A"+/r0.3cm/;"B"+/l1cm/,
\ar_{\overline{f}} "A"+/d0.2cm/+/r0.2cm/;"C"+/u0.2cm/+/l0.2cm/,
\ar^{\varepsilon \cdot 1} "B"+/d0.2cm/+/l0.2cm/;"C"+/u0.2cm/+/r0.2cm/,
\ar^{u\varepsilon } "A"+/u0.2cm/+/r0.2cm/;"D"+/d0.2cm/+/l0.2cm/,
\ar_{1\cdot \varepsilon } "B"+/u0.2cm/+/l0.2cm/;"D"+/d0.2cm/+/r0.2cm/
\end{xy}
\]
where $u:\Q \to M(S^{n})$ is the unit map.
\end{lem}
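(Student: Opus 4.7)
My plan is to obtain $\overline g$ by deforming an arbitrary CDGA model of $g$ along homotopies so that the required squares become strictly commutative, in each step invoking the strict lifting property of Remark \ref{rem:2.2} for a carefully chosen surjective quasi-isomorphism.

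\emph{First diagram.} Pick any CDGA model $\overline g'\colon\Lambda V\to M(S^{n})\otimes B$ for $g$. Since $g|_{X}=f$, the composite $(\varepsilon\cdot 1)\overline g'$ is a CDGA model for $f$ and hence homotopic to $\overline f$; fix a homotopy $H\colon\Lambda V\to B\otimes \Lambda(t,dt)$ with $\varepsilon_{0}H=(\varepsilon\cdot 1)\overline g'$ and $\varepsilon_{1}H=\overline f$. Form the pullback CDGA
$$
P:=(M(S^{n})\otimes B)\times_{B}(B\otimes \Lambda(t,dt))
$$
taken over $\varepsilon\cdot 1$ and the endpoint map $\varepsilon_{0}$. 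The pair $(\overline g',H)$ assembles into a CDGA map $\Lambda V\to P$, while the evident map
$$
\pi\colon(M(S^{n})\otimes B)\otimes \Lambda(t,dt)\longrightarrow P
$$
is a surjective quasi-isomorphism (both sides are quasi-isomorphic to $M(S^{n})\otimes B$, and surjectivity is checked directly by lifting the $\Lambda(t,dt)$-coefficients). Remark \ref{rem:2.2} then supplies a strict lift $\widetilde H\colon\Lambda V\to (M(S^{n})\otimes B)\otimes \Lambda(t,dt)$, and I set $\overline g:=\varepsilon_{1}\widetilde H$. By construction $\overline g\simeq\overline g'$, so $\overline g$ remains a model for $g$, and $(\varepsilon\cdot 1)\overline g=\varepsilon_{1}H=\overline f$ strictly.

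\emph{Second diagram.} Suppose in addition that $g|_{S^{n}}=*$. Starting now from the $\overline g$ just constructed, $(1\cdot\varepsilon)\overline g$ and $u\varepsilon\overline f$ are both CDGA models for the constant map $S^{n}\to Y$, hence homotopic; fix such a homotopy $K\colon\Lambda V\to M(S^{n})\otimes \Lambda(t,dt)$. I would now run the argument of the first step in a relative fashion, replacing $P$ by the larger pullback built over the three surjections $\varepsilon\cdot 1$, $1\cdot\varepsilon$ and $\varepsilon_{0}$, and using $K$ on the $M(S^{n})$-factor together with the constant homotopy $\overline f$ on the $B$-factor, so that the strict equality already achieved is preserved throughout the deformation. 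A second application of Remark \ref{rem:2.2} to the corresponding surjective quasi-isomorphism then produces the required $\overline g$ satisfying both strict identities at once.

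\emph{Main obstacle.} The delicate point is the second step: one must arrange $K$ so that its image under the augmentation of $M(S^{n})$ agrees with the constant homotopy $\varepsilon\overline f$ on the $B$-side (modifying $K$ within its homotopy class if necessary), so that $K$ and the constant $B$-homotopy glue into a genuine CDGA map into the enlarged pullback, and then verify that the evaluation map from $(M(S^{n})\otimes B)\otimes \Lambda(t,dt)$ onto this pullback is still a surjective quasi-isomorphism. Once these compatibilities are in place the argument is parallel to that of the first step.
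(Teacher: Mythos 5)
Your argument is correct, but it takes a genuinely different route from the paper's. The paper starts from an arbitrary model $\overline{g}'$ and simply writes down an explicit correction on generators, $\overline{g}(v)=1\otimes(\overline{f}-(\varepsilon\cdot 1)\overline{g}')(v)+\overline{g}'(v)$, together with an explicit homotopy $\overline{H}$ from $\overline{g}'$ to $\overline{g}$; the whole proof is a two-line formula, and the second assertion is dispatched with ``a similar argument.'' You instead package $\overline{g}'$ and the homotopy $H$ from $(\varepsilon\cdot 1)\overline{g}'$ to $\overline{f}$ into a single map to the pullback $P=(M(S^{n})\otimes B)\times_{B}(B\otimes\Lambda(t,dt))$ and lift it strictly through the surjective quasi-isomorphism $(M(S^{n})\otimes B)\otimes\Lambda(t,dt)\to P$ via Remark \ref{rem:2.2}. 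This is the standard obstruction-theoretic rectification of a homotopy-commutative triangle with a Sullivan source; it is more systematic and makes it transparent that the modified $\overline{g}$ is still a model for $g$, at the cost of having to verify that $\pi$ is a surjective quasi-isomorphism (your check is right: surjectivity is an explicit splitting, and the fibre is the acyclic kernel of the evaluation $\varepsilon_{0}$). By contrast, the paper's bare formula leaves to the reader the verification that the corrected map is compatible with the differentials, which your method gets for free. For the second diagram, the ``main obstacle'' you flag is in fact vacuous: since $V=V^{\geq 2}$ while $\Lambda(t,dt)$ is concentrated in degrees $0$ and $1$, the composite $(\varepsilon_{M(S^{n})}\otimes 1)K$ annihilates every generator of $V$ for degree reasons, so $K$ is automatically compatible with the constant homotopy at $\overline{f}$ on the $B$-factor, the gluing into the enlarged pullback exists, and the surjectivity and acyclicity checks go through exactly as in the first step. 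So your sketch of the second part completes without further input, and in that respect it is no less detailed than the paper's own treatment.
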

\begin{proof}
Let $\overline{g}'$ be a model for $g$. We define the map $\overline{g}:\Lambda V\to M(S_{n})\otimes B$ by
$$
\overline{g}(v)=1\otimes (\overline{f}-(\varepsilon \cdot 1)\overline{g}')(v)+\overline{g}'(v).
$$
Then $\overline{g}$ and $\overline{g}'$ are homotopic. Indeed, $\overline{f}$ and $pr\circ \overline{g}'$ are homotopic and 
let $H:\Lambda V\longrightarrow B\otimes \Lambda (t,dt)$ be a its homotopy. Then, the map $\overline{H}:\Lambda V\longrightarrow M(S^{n})\otimes B\otimes \Lambda (t,dt)$ defined by
$$
\overline{H}(v)=1\otimes H(v)+\overline{g}'(v)\otimes 1-1\otimes (\varepsilon \cdot 1) \overline{g}'(v)\otimes 1
$$
is a homotopy from $\overline{g}'$ to $\overline{g}$. A similar argument shows the second assertion.
\end{proof}

Given $\alpha \in \pi_{n}(\map(X,Y;f))$ and $\beta \in \pi_{m}(\map(X,Y;f))$. Let $g:S^{n}\times X\to Y$ and $h:S^{m}\times X\to Y$ be the adjoint maps of $\alpha $ and $\beta $, respectively. In order to consider the image of $[\alpha ,\beta ]_{w}$ by $\Phi$, we construct a model for the adjoint of $[\alpha ,\beta ]_{w}$
\[
\begin{xy}
(0,0)="A"*{ad([\alpha ,\beta ]_{w}):S^{n+m-1}\times X},  (50,0)="B"*{(S^{n}\vee S^{m})\times X}, (80,0)="C"*{Y,}
\ar ^{\eta\times 1} "A"+/r2cm/;"B"+/l1.3cm/, \ar ^{(g|h)} "B"+/r1.3cm/;"C"+/l0.3cm/
\end{xy}
\]
where $(g|h)$ is a map defined by $(g|h)(u_{n},x)=g(u_{n},x)$ and $(g|h)(u_{m},x)=h(u_{m},x)$ for any $u_{n}\in S^{n}$, $u_{m}\in S^{m}$ and $x\in X$.
It is readily seen that the canonical map
$$
\pi :M(S^{n}\vee S^{m})\longrightarrow M(S^{n})\times _{\Q} M(S^{m})
$$
is a surjective quasi-isomorphism, where $M(S^{n})\times _{\Q} M(S^{m})$ is the pull-back of the augmentations $M(S^{n})\to \Q$ and $M(S^{m})\to \Q$. By Proposition \ref{liftting lemma}, we have the following homotopy commutative square
\[\xymatrix{
A_{{\rm PL}}(S^{n}\vee S^{m}) \ar[rrr]^-{(A_{{\rm PL}}(i_{1}),A_{{\rm PL}}(i_{2}))} & & &A_{{\rm PL}}(S^{n})\times _{\Q}A_{{\rm PL}}(S^{m})\\
M(S^{n}\vee S^{m}) \ar[u]^{\simeq} \ar[rrr]_-{\pi}  & && M(S^{n})\times _{\Q} M(S^{m}), \ar[u]_{\simeq}
}\]
where $i_{1}:S^{n}\to S^{n}\vee S^{m}$ and $i_{2}:S^{m}\to S^{n}\vee S^{m}$ are the inclusions.
The commutative diagram
\begin{equation}\label{(3.1)}
\begin{split}
\begin{xy}
(0,0)="A"*{(S^{n}\vee S^{m})\times X}, (35,0)="B"*{S^{m}\times X}, (-35,0)="C"*{S^{n}\times X}, (0,-13)="D"*{Y},
\ar _{i_{2}\times 1}"B"+/l0.8cm/;"A"+/r1.2cm/, \ar ^{i_{1}\times 1} "C"+/r0.8cm/;"A"+/l1.2cm/, \ar _{(g|h)}"A"+/d0.2cm/;"D"+/u0.2cm/, \ar ^{h}"B"+/d0.2cm/;"D"+/r0.2cm/, \ar _{g}"C"+/d0.2cm/;"D"+/l0.2cm/
\end{xy}
\end{split}
\end{equation}
enables us to give the following homotopy commutative diagram:
\begin{equation}\label{(3.2)}
\begin{split}
\xymatrix{
& & & M(S^{n}\vee S^{m})\otimes B \ar[d]^{\pi \otimes 1}\\
\Lambda V \ar[rrr]_-{(\overline{g},\overline{h})} \ar[rrru]^{\overline{(g|h)}} & & & (M(S^{n})\times _{\Q} M(S^{m}))\otimes B,
}
\end{split}
\end{equation}
where $(\overline{g},\overline{h})$ is the map defined by $(\overline{g},\overline{h})(v)=-1\otimes \overline{f}(v)+(j_{1}\otimes 1)\overline{g}(v)+(j_{2}\otimes 1)\overline{h}(v)$ for any $v\in V$ and $j_{1}:M(S^{n})\to M(S^{n})\times _{\Q}M(S^{m})$ and $j_{2}:M(S^{m})\to M(S^{n})\times _{\Q}M(S^{m})$ are the inclusion.
Indeed, by the diagram \eqref{(3.1)}, we see that the diagram
\[
\begin{xy}
(-45,0)="B"*{M(S^{n})\otimes B}, (0,0)="A"*{(M(S^{n})\times _{\Q}M(S^{m}))\otimes B}, (45,0)="C"*{M(S^{m})\otimes B}, (0,-13)="D"*{\Lambda V},
\ar _{p_{1}\otimes 1}"A"+/l2cm/;"B"+/r1cm/, \ar ^{p_{2}\otimes 1}"A"+/r2cm/;"C"+/l1cm/, \ar ^{(\pi \otimes 1)\overline{(g|h)}}"D"+/u0.2cm/;"A"+/d0.2cm/, \ar ^{\overline{g}} "D"+/l0.3cm/;"B"+/d0.3cm/, \ar _{\overline{h}} "D"+/r0.3cm/;"C"+/d0.3cm/
\end{xy}
\]
is homotopy commutative, where $p_{1}$ and $p_{2}$ are the projection. Let $H_{1}$ and $H_{2}$ be homotopies from $(p_{1}\pi \otimes 1)\overline{(g|h)}$ to $\overline{g}$ and from $(p_{2}\pi \otimes 1)\overline{(g|h)}$ to $\overline{h}$, respectively. Then, a CDGA map $H:\Lambda V\to (M(S^{n})\times_{\Q} M(S^{m}))\otimes B \otimes \Lambda (t,dt)$ defined by
$$
H(v)=-1\otimes \overline{f}(v)\otimes 1 + (j_{1}\otimes 1\otimes 1)H_{1}(v) +(j_{2}\otimes 1\otimes 1)H_{2}(v)
$$
for any $v\in V$ is a homotopy from $(\pi \otimes 1)\overline{(g|h)}$ to $(\overline{g},\overline{h})$.
If there is a map $\phi :\Lambda V\to M(S^{n}\vee S^{m})\otimes B $ such that $(\pi \otimes 1)\phi =(\overline{g},\overline{h})$, $\phi $ and $\overline{(g|h)}$ is homotopic by Proposition \ref{liftting lemma}. Therefore,
it is only necessary to construct of a lift $\phi$ of the diagram \eqref{(3.2)} for getting a model for $(g|h)$. 
\begin{lem}\label{lem:3.3}
There is a model $\phi $ for $(g|h)$ such that for any $v\in V$, $\phi (v)$ has no term of the form $e_{n}e_{m}\otimes u$ for some $u \in B$ and the following diagram commutes strictly;
\[
\begin{xy}
(0,0)="A"*{\Lambda V},
(40,0)="B"*{M(S^{n}\vee S^{m})\otimes B},
(15,-12)="C"*{B.},
\ar^{\phi } "A"+/r0.3cm/;"B"+/l1.5cm/,
\ar_{\overline{f}} "A"+/d0.2cm/+/r0.2cm/;"C"+/u0.2cm/+/l0.2cm/,
\ar^{\varepsilon \cdot 1} "B"+/d0.2cm/+/l0.2cm/;"C"+/u0.2cm/+/r0.2cm/
\end{xy}
\]
\end{lem}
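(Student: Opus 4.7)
My plan is to construct $\phi$ inductively along the Sullivan filtration $V(0)\subset V(1)\subset\cdots$ by repeatedly applying Remark~\ref{rem:2.2} to the surjective quasi-isomorphism $\pi\otimes 1\colon M(S^n\vee S^m)\otimes B\to (M(S^n)\times_\Q M(S^m))\otimes B$, and at each stage correcting the output to remove any term of the form $e_n e_m\otimes u$.

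I would first observe that the augmentation triangle is automatic for any CDGA lift. Let $\varepsilon_\vee$ denote the augmentation of $M(S^n)\times_\Q M(S^m)$. Then $\varepsilon_\vee\pi=\varepsilon$, and from the definition of $(\overline{g},\overline{h})$ in \eqref{(3.2)} together with Lemma~\ref{lem:3.1},
\[
(\varepsilon_\vee\otimes 1)(\overline{g},\overline{h})(v)=-\overline{f}(v)+\overline{f}(v)+\overline{f}(v)=\overline{f}(v).
\]
Hence any CDGA lift $\phi$ of $(\overline{g},\overline{h})$ through $\pi\otimes 1$ automatically satisfies $(\varepsilon\otimes 1)\phi=\overline{f}$, and it suffices to produce a lift whose value on every generator of $V$ has no $e_n e_m\otimes u$ term.

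The lift itself is produced by the standard induction. For $v\in V(0)$ we have $dv=0$ and $(\overline{g},\overline{h})(v)$ is a cycle, so Remark~\ref{rem:2.2} yields a cycle $\phi(v)$ lifting it. For $v\in V(i)$, with $\phi$ already defined on $\Lambda V(i-1)$, the element $\phi(dv)$ is defined and a cycle, and Remark~\ref{rem:2.2} again produces a candidate $\phi(v)$ with $d\phi(v)=\phi(dv)$ and $(\pi\otimes 1)\phi(v)=(\overline{g},\overline{h})(v)$. The correction step is the heart of the argument: write $\phi(v)=e_n e_m\otimes u+\widetilde{\phi}(v)$, where $\widetilde{\phi}(v)$ is free of $e_n e_m$-terms. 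Since $d\iota_{n+m-1}=e_n e_m$, the element
\[
\xi:=d(\iota_{n+m-1}\otimes u)=e_n e_m\otimes u+(-1)^{n+m-1}\iota_{n+m-1}\otimes du
\]
is a coboundary lying in $\ker(\pi\otimes 1)$, because $\pi(\iota_{n+m-1})=0$. Replacing $\phi(v)$ by $\phi(v)-\xi$ therefore preserves $d\phi(v)=\phi(dv)$ and $(\pi\otimes 1)\phi(v)=(\overline{g},\overline{h})(v)$, hence also the augmentation identity, while eliminating the offending $e_n e_m$-term.

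The main point to worry about is whether the correction interferes with later inductive steps, but it does not: $\xi$ is a coboundary, so the differential datum is undisturbed, and its $\pi$-image vanishes, so the lifting datum is preserved. Moreover the conclusion only restricts $\phi$ on basis elements of $V$, so any $e_n e_m$-terms that may appear in products $\phi(v_1)\cdots\phi(v_s)$ occurring in subsequent expressions $\phi(dw)$ are immaterial to the statement.
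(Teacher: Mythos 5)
Your proof is correct and takes essentially the same route as the paper: both produce the lift of $(\overline{g},\overline{h})$ through $\pi\otimes 1$ via the inductive construction of Remark~\ref{rem:2.2} and then eliminate the $e_{n}e_{m}\otimes u$ term by absorbing it into the $\iota_{n+m-1}$ component --- your subtraction of the coboundary $\xi=d(\iota_{n+m-1}\otimes u)$ is precisely the paper's replacement of $a_{4}$ by $a_{4}+da_{5}$ (up to sign), and in both cases the correction lies in $\Ker(\pi\otimes 1)$ so the lifting and augmentation conditions persist. If anything, your version is slightly more careful about the sign and about why the correction does not disturb the later inductive steps.
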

\begin{proof}
First, we recall the construction of a lift $\phi '$ in Remark \ref{rem:2.2}. For any basis $v$ of $V$, we can find $a\in M(S^{n}\vee S^{m})\otimes B$ so that $da=\phi 'dv$ and $(\pi \otimes 1)a=(\overline{g},\overline{h}) v$. We may write
$$
a=1\otimes \overline{f}(a)+e_{n}\otimes a_{2}+e_{m}\otimes a_{3}+ \iota_{n+m-1}\otimes a_{4}+e_{n}e_{m}\otimes a_{5}+{\mathcal O}_{a},
$$
where $a_{i}\in B$ and ${\mathcal O}_{a}$ denote other terms. We put
\begin{equation}\label{(3.3)}
a'=1\otimes \overline{f}(a)+e_{n}\otimes a_{2}+e_{m}\otimes a_{3}+ \iota_{n+m-1}\otimes (a_{4}+da_{5})+{\mathcal O}_{a}.
\end{equation}
Then it follows that $d(a)=d(a')$ and $(\pi \otimes 1)(a)=(\pi \otimes 1)(a')$. Hence, the map $\phi $ defined by
$$
\phi (v)=a'
$$
satisfies the condition that $(\pi \otimes 1)\phi =(\overline{g},\overline{h})$. Thus we see that $\phi$ is a model for $(g|h)$. The second assertion is shown using the equation \eqref{(3.3)}.
\end{proof}
Combining these results we prove our main result.\\
\noindent
{\it Proof of Theorem \ref{thm:1.1}}.
Given $\alpha \in \pi _{n}(\map(X,Y;f))$ and $\beta \in \pi _{m}(\map(X,Y;f))$. Let $g:S^{n}\times X\to Y$ and $h:S^{m}\times X\to Y$ be the adjoint maps of $\alpha $ and $\beta $, respectively. First, by the proof of Proposition \ref{w.p}, we see that a model $\overline{\eta }$ for the universal example $\eta$ sends $\iota _{n+m-1}\in M(S^{n}\vee S^{m})$ to $(-1)^{n+m-1}e_{n+m-1}\in M(S^{n+m-1})$. We choose a model
$\phi $ for the map $(g|h)$ as in Lemma \ref{lem:3.3}. We may write, modulo the ideal generated by elements of $M(S^{n}\vee S^{m})$ of degree greater than $n+m-1$ and generators $e_{2n-1}$ and $e_{2m-1}$ if there exists,
\begin{align*}
&\phi (v)\equiv 1\otimes \overline{f}(v) + e_{n}\otimes u_{2} +e_{m}\otimes u_{3} +\iota _{n+m-1}\otimes u_{4},\\
&\phi (v_{i})\equiv 1\otimes \overline{f}(v_{i}) + e_{n}\otimes u_{i2}+ e_{m}\otimes u_{i3}+\iota _{n+m-1}\otimes u_{i4} 
\end{align*}
for any $v\in V$ and $dv=\sum v_{1}v_{2}\cdots v_{s}$. Since, $(\overline{\eta }\otimes 1)\phi (v)=1\otimes \overline{f}(v)+e_{n+m-1}\otimes (-1)^{n+m-1}u_{4}$, it follows that $\Phi( [\alpha ,\beta ]_{w})(v)=(-1)^{n+m-1}u_{4}$. On the other hand, $\phi $ is a CDGA map and satisfies the condition of Lemma \ref{lem:3.3}. We then have
\begin{align*}
&e_{n}e_{m}\otimes u_{4}=\\
& \ \ \ e_{n}e_{m}\otimes\displaystyle\sum \Bigl( \displaystyle\sum_{i\neq j}(-1)^{\varepsilon _{ij}}\overline{f}(v_{1}\cdots v_{i-1})u_{i2}\overline{f}(v_{i+1}\cdots v_{j-1})u_{j3}\overline{f}(v_{j+1}\cdots v_{s})\Bigr).
\end{align*}
By commutativity of the diagram \eqref{(3.2)}
and the definition of $\Phi $, we see that $u_{i2}=\Phi (\alpha )(v_{i})$ and $u_{j3}=\Phi (\beta )(v_{j})$. Therefore,
$
\Phi ([\alpha ,\beta ]_{w})(v)=(-1)^{n+m-1}u_{4}=[\Phi (\alpha ),\Phi (\beta )](v).
$
This completes the proof.
\hfill\qed
\\
\ \\
\indent
In the rest of this section, we also consider the Whitehead product in a based mapping space $\map_{*}(X,Y;f)$.
Given $\alpha \in \pi _{n}(\map_{*}(X,Y;f))$ and let $g:S^{n}\times X\to Y$ be the adjoint map of $\alpha $. Since $g$ satisfy $g|_{X}=f$ and $g|_{S^{n}}=*$, by Lemma \ref{lem:3.1}, there exists a model for $g$, $\overline{g}$, such that $(\varepsilon \cdot 1)\overline{g}=\overline{f}$ and $(1\cdot \varepsilon )\overline{g}=u\varepsilon $. The second equation shows that $\Phi (\alpha )$ is a $\overline{f}$-derivation of degree $-n$ from $\Lambda V$ to the augmentation ideal $B^{+}$ of $B$. We then get the map of abelian groups
$$
\Phi ': \pi_{n}(\map_{*}(X,Y;f))\longrightarrow H^{-n}_{{\rm AQ}}(\Lambda V,B^{+};\overline{f}); \ \Phi '(\alpha )=\Phi (\alpha )
$$
for $n\geq 2$ and a straight-forward modification of Theorem \ref{thm:2.3} shows the following proposition..
\begin{prop}
The map $\Phi' : \pi_{n}(\map_{*}(X,Y;f))\to H^{-n}_{{\rm AQ}}(\Lambda V,B^{+};\overline{f})$ is an isomorphism for $n\geq 2$.
\end{prop}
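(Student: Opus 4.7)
The plan is to argue by the five lemma applied to a ladder of long exact sequences, one coming from the evaluation fibration and the other from the coefficient sequence $0\to B^{+}\to B\stackrel{\varepsilon}{\to}\Q\to 0$ of $\Lambda V$-modules (where $\Lambda V$ acts through $\overline{f}$).

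First I would verify that $\Phi'$ is well defined, namely that $\Phi(\alpha)$ really takes values in $B^{+}$. Given $\alpha\in\pi_{n}(\map_{*}(X,Y;f))$, its adjoint $g:S^{n}\times X\to Y$ satisfies $g|_{X}=f$ and $g|_{S^{n}}=*$, so the second assertion of Lemma~\ref{lem:3.1} furnishes a model $\overline{g}$ with $(\varepsilon\cdot 1)\overline{g}=\overline{f}$ and $(1\cdot\varepsilon)\overline{g}=u\varepsilon$. Writing $(\phi\otimes 1)\overline{g}(v)=1\otimes\overline{f}(v)+e_{n}\otimes\Phi(\alpha)(v)$, the second equality yields $\varepsilon(\Phi(\alpha)(v))=0$ for every $v\in V$, so $\Phi(\alpha)\in\Der^{-n}(\Lambda V,B^{+};\overline{f})$. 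Independence from the choice of $\overline{g}$ follows exactly as in the proof of Theorem~\ref{thm:2.3}, since two models satisfying both compatibilities are homotopic through homotopies respecting them.

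For the isomorphism, consider the evaluation fibration $\map_{*}(X,Y;f)\to\map(X,Y;f)\stackrel{\mathrm{ev}}{\to}Y$ and, on the algebraic side, the short exact sequence of $\overline{f}$-derivation complexes
$$0\longrightarrow \Der^{*}(\Lambda V,B^{+};\overline{f})\longrightarrow \Der^{*}(\Lambda V,B;\overline{f})\stackrel{\varepsilon_{*}}{\longrightarrow}\Der^{*}(\Lambda V,\Q;\varepsilon\overline{f})\longrightarrow 0$$
induced by $0\to B^{+}\to B\to\Q\to 0$. Passing to cohomology, and using the special case $X=*$ of Theorem~\ref{thm:2.3}, which provides an isomorphism $H^{-n}_{\mathrm{AQ}}(\Lambda V,\Q;\varepsilon\overline{f})\cong\pi_{n}(Y)$, one obtains a commutative ladder of two long exact sequences whose middle vertical is $\Phi$ (an isomorphism by Theorem~\ref{thm:2.3}) and whose outer verticals are isomorphisms; $\Phi'$ is the only unknown vertical. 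The five lemma then yields that $\Phi'$ is an isomorphism for all $n\geq 2$.

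The main obstacle is verifying commutativity of this ladder. Two of the squares are easy: the forgetful map $\pi_{n}(\map_{*}(X,Y;f))\to\pi_{n}(\map(X,Y;f))$ corresponds under $\Phi',\Phi$ to the inclusion $\Der^{*}(\Lambda V,B^{+};\overline{f})\hookrightarrow\Der^{*}(\Lambda V,B;\overline{f})$ because the same derivation represents both classes; and the map induced by $\mathrm{ev}$ on $\pi_{n}$ corresponds to $\varepsilon_{*}$ because the adjoint of $\mathrm{ev}\circ F$ is the restriction $g|_{S^{n}\times\{*\}}$, which is modelled by $1\cdot\varepsilon:M(S^{n})\otimes B\to M(S^{n})$. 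The square involving the two connecting homomorphisms is the delicate point and calls for an explicit diagram chase: one traces a class in $\pi_{n+1}(Y)$ through the fibration boundary by lifting to a null-homotopy of the evaluation and then through the cohomological snake boundary associated to the coefficient sequence; both constructions unravel to the same cocycle on the Sullivan model, proving commutativity.
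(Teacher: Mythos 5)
Your well-definedness argument for $\Phi'$ (via the second assertion of Lemma \ref{lem:3.1}) is exactly the paper's. For the isomorphism itself you take a genuinely different route: the paper simply reruns the proof of Theorem \ref{thm:2.3} with $B^{+}$ in place of $B$ (``a straight-forward modification''), constructing the isomorphism directly on the based mapping space, whereas you propose to deduce the based case formally from the unbased one by comparing the long exact sequence of the evaluation fibration $\map_{*}(X,Y;f)\to\map(X,Y;f)\to Y$ with the long exact cohomology sequence of $0\to\Der^{*}(\Lambda V,B^{+};\overline{f})\to\Der^{*}(\Lambda V,B;\overline{f})\to\Der^{*}(\Lambda V,\Q;\varepsilon\overline{f})\to 0$ and invoking the five lemma. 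The ingredients you cite are all available: the coefficient sequence of derivation complexes is indeed short exact (a $\Q$-valued derivation lifts to a $B$-valued one by lifting its values on a basis of $V$); since $\Lambda V$ is minimal and $\varepsilon\overline{f}(V)=0$, the complex $\Der^{*}(\Lambda V,\Q;\varepsilon\overline{f})$ has zero differential, giving $H^{-n}_{{\rm AQ}}(\Lambda V,\Q;\varepsilon\overline{f})\cong\Hom(V^{n},\Q)\cong\pi_{n}(Y)$ because $Y$ is $\Q$-local of finite type; and for $n\geq 2$ all four outer columns of the ladder lie in the range where Theorem \ref{thm:2.3} applies. What your approach buys is that no new lifting construction on $M(S^{n})\otimes B$ is needed; what it costs is the compatibility of the two exact sequences.

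That cost is where the gap sits. The two squares you call easy are easy, but the square pairing the fibration boundary $\pi_{n+1}(Y)\to\pi_{n}(\map_{*}(X,Y;f))$ with the snake map $H^{-n-1}_{{\rm AQ}}(\Lambda V,\Q;\varepsilon\overline{f})\to H^{-n}_{{\rm AQ}}(\Lambda V,B^{+};\overline{f})$ carries essentially all of the content of this strategy, and as written you only announce that ``both constructions unravel to the same cocycle.'' Proving it requires producing, from a class in $\pi_{n+1}(Y)$, an explicit representative of its image under the fibration boundary together with a model for its adjoint $S^{n}\times X\to Y$, and then matching the resulting derivation with $\partial\theta$ for $\theta$ a $B$-valued lift of the corresponding $\Q$-valued derivation; this verification is comparable in length and technique to the direct modification of Theorem \ref{thm:2.3} that the paper performs, and is close in spirit to the compatibility statements for evaluation fibrations and derivation complexes in \cite{LS2007}. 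So the route is viable and would yield the proposition, but the decisive square must be proved rather than asserted; until it is, the five lemma cannot be applied.
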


This proposition also enables us to get the following corollary.

\begin{cor}\label{cor:3.5}
The restriction of the bracket defined by \eqref{(1.1)} in $H_{{\rm AQ}}^{*}(\Lambda V,B;\overline{f})$ to $H_{{\rm AQ}}^{*}(\Lambda V,B^{+};\overline{f})$ corresponds the Whitehead product in $\pi _{*}(\map_{*}(X,Y;f))$ via the above isomorphism  $\Phi $ from $\pi _{n}(\map_{*}(X,Y;f))$ to $H_{{\rm AQ}}^{-n}(\Lambda V,B^{+};\overline{f})$.
\end{cor}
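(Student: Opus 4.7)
The plan is to deduce Corollary \ref{cor:3.5} from Theorem \ref{thm:1.1} by naturality of the inclusion $\iota:\map_*(X,Y;f)\hookrightarrow \map(X,Y;f)$. Since Whitehead products are natural under continuous maps, $\iota_*$ sends $[\alpha,\beta]_w$ to $[\iota_*\alpha,\iota_*\beta]_w$. Moreover, by the very definition of $\Phi'$ given just before the statement, $\Phi'$ is the restriction of $\Phi$ along $\iota_*$, with target tautologically landing inside $B^+$-valued derivations; this yields a commutative square
\[
\xymatrix{
\pi_n(\map_*(X,Y;f)) \ar[r]^-{\Phi'}\ar[d]_{\iota_*} & H^{-n}_{{\rm AQ}}(\Lambda V,B^+;\overline{f}) \ar[d]^{j_*}\\
\pi_n(\map(X,Y;f)) \ar[r]^-{\Phi} & H^{-n}_{{\rm AQ}}(\Lambda V,B;\overline{f}),
}
\]
where $j_*$ is induced by the inclusion $B^+\hookrightarrow B$ on coefficients.

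Next I would observe that the formula \eqref{(1.1)} restricts to a bracket on $\Der^*(\Lambda V,B^+;\overline{f})$. Each summand of $[\varphi,\psi](v)$ contains a factor $\varphi(v_i)\in B^+$ (and likewise $\psi(v_j)\in B^+$), and since $B^+$ is an ideal of $B$ the whole product lies in $B^+$. Closedness under $\partial$ is immediate, since $B^+$ is a differential ideal and the derivation property propagates $B^+$-valuedness from $V$ to all of $\Lambda^{\geq 1}V$. Applying Theorem \ref{thm:1.1} to $\iota_*\alpha$ and $\iota_*\beta$ and chasing the square gives
\[
j_*\,\Phi'([\alpha,\beta]_w)=\Phi([\iota_*\alpha,\iota_*\beta]_w)=[\Phi(\iota_*\alpha),\Phi(\iota_*\beta)]=j_*\,[\Phi'(\alpha),\Phi'(\beta)]
\]
in $H^{-(n+m+1)}_{{\rm AQ}}(\Lambda V,B;\overline{f})$.

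To remove $j_*$ I would reinspect the proof of Theorem \ref{thm:1.1} in the based setting. Since now $g|_{S^n}=*$ and $h|_{S^m}=*$, the second half of Lemma \ref{lem:3.1} allows one to choose models $\overline{g}$ and $\overline{h}$ additionally satisfying $(1\cdot\varepsilon)\overline{g}=u\varepsilon$ and $(1\cdot\varepsilon)\overline{h}=u\varepsilon$. A direct refinement of the inductive construction of Lemma \ref{lem:3.3} produces a model $\phi$ for $(g|h)$ with the extra property $(1\cdot\varepsilon)\phi=u\varepsilon$; under this normalization the coefficients $u_{i2}$, $u_{j3}$ and $u_4$ that appear in the proof of Theorem \ref{thm:1.1} all lie in $B^+$, so the identity $\Phi'([\alpha,\beta]_w)(v)=[\Phi'(\alpha),\Phi'(\beta)](v)$ already holds in $\Der^*(\Lambda V,B^+;\overline{f})$ and descends to the required equality in $H^*_{{\rm AQ}}(\Lambda V,B^+;\overline{f})$. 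The one technical point, and hence the main obstacle, is this based refinement of Lemma \ref{lem:3.3}; once it is carried out the rest is pure diagram chasing.
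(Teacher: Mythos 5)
Your proposal is correct, and its core observation --- that every summand of formula \eqref{(1.1)} contains a factor $\varphi(v_{i})\in B^{+}$ (and $\psi(v_{j})\in B^{+}$), so the bracket of $B^{+}$-valued derivations is again $B^{+}$-valued because $B^{+}$ is an ideal --- is precisely the paper's entire proof, which consists of the single remark that $\varepsilon\Phi'(\alpha)=0$ and $\varepsilon\Phi'(\beta)=0$ force $\varepsilon\Phi'([\alpha,\beta]_{w})=0$ by \eqref{(1.1)}. The one place you go beyond the paper is the final step: rather than concluding from the identity $j_{*}\Phi'([\alpha,\beta]_{w})=j_{*}[\Phi'(\alpha),\Phi'(\beta)]$ in $H^{*}_{{\rm AQ}}(\Lambda V,B;\overline{f})$ (which would require injectivity of $j_{*}$, and $\Der^{*}(\Lambda V,B^{+};\overline{f})$ is a subcomplex but not an obvious direct summand of $\Der^{*}(\Lambda V,B;\overline{f})$), you rerun the proof of Theorem \ref{thm:1.1} with based models so that the cochain-level identity already lives in $\Der^{*}(\Lambda V,B^{+};\overline{f})$. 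This refinement of Lemma \ref{lem:3.3} does go through: with based $\overline{g},\overline{h}$ one checks $(1\cdot\varepsilon)(\overline{g},\overline{h})=u\varepsilon$, so $(1\cdot\varepsilon)(a)$ is a cocycle in the acyclic complex $\Ker\pi$ and the lift $a$ can be corrected by a term $d\xi\otimes 1$ without changing $da$ or $(\pi\otimes 1)a$; this is extra care the paper leaves implicit, and it is warranted.
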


\begin{proof}
Given $\alpha \in \pi _{n}(\map(X,Y;f))$ and $\beta \in \pi _{m}(\map(X,Y;f))$. Since $\varepsilon \Phi '(\alpha )=0$ and $\varepsilon \Phi '(\beta )=0$, it follows that $\varepsilon \Phi' ([\alpha ,\beta ]_{w})=0$ by the formula \eqref{(1.1)}.
\end{proof}

\section{The Whitehead length of mapping spaces}

In this section, we consider the Whitehead length of mapping spaces. We recall the definition of the Whitehead length; see Section 1.
By the definition, $\wl (Z)=1$ means that all Whitehead products vanish. Now we consider a upper bound of $\wl (\map (X,Y;f))$. The following result is proved by Lupton and Smith.

\begin{prop}[{\cite[Theorem 6.4]{LS}}]\label{prop:4.1}
Let  $X$ and $Y$ be $\Q$-local, simply-connected spaces with finite type. If $Y$ is coformal; that, is a minimal Sullivan model for $Y$ of the form $(\Lambda V,d_{1})$, then
$$
\wl(\map(X,Y;f))\leq \wl (Y).
$$
\end{prop}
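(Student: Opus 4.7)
The plan is to use Theorem \ref{thm:1.1} to replace the Whitehead product on $\map(X,Y;f)$ by the bracket (1.1) on $H^{*}_{\rm AQ}(\Lambda V,B;\overline{f})$, then exploit coformality to factor that bracket through the Whitehead product on $\pi_{*}(Y)\otimes\Q$. Throughout I identify $\Der^{*}(\Lambda V,B;\overline{f})\cong \Hom^{*}(V,B)\cong V^{\#}\otimes B$, where $V^{\#}$ denotes the graded dual of $V$; under the standard duality for a minimal Sullivan model one has $(V^{n})^{\#}\cong\pi_{n}(Y)\otimes\Q$.

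Because $Y$ is coformal, $d=d_{1}$ and every monomial in $dv$ has exactly two factors, so (1.1) collapses to
$$[\varphi,\psi](v)=(-1)^{n+m-1}\sum\bigl[(-1)^{|\psi||v_{1}|+|\varphi||\psi|}\varphi(v_{1})\psi(v_{2})+(-1)^{|\varphi||v_{1}|}\psi(v_{1})\varphi(v_{2})\bigr],$$
where $d_{1}v=\sum v_{1}v_{2}$. Substituting pure tensors $\varphi=\alpha\otimes a$ and $\psi=\beta\otimes b$ from $V^{\#}\otimes B$ and pulling the scalars $\alpha(v_{i}),\beta(v_{j})$ out through the graded commutative product of $B$ gives
$$[\alpha\otimes a,\beta\otimes b]=\pm(\alpha\bullet\beta)\otimes ab,$$
where $\bullet:V^{\#}\otimes V^{\#}\to V^{\#}$ is the bracket dual to $d_{1}:V\to\Lambda^{2}V$. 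Proposition \ref{w.p} identifies $\bullet$, up to sign, with the Whitehead product on $\pi_{*}(Y)\otimes\Q$.

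Since (1.1) is $\Q$-bilinear, it suffices to evaluate iterated brackets on pure tensors. Induction on $k$ using the pure-tensor formula above yields
$$[\alpha_{1}\otimes a_{1},[\ldots,[\alpha_{k-1}\otimes a_{k-1},\alpha_{k}\otimes a_{k}]\cdots]]=\pm\bigl[\alpha_{1},[\ldots,[\alpha_{k-1},\alpha_{k}]_{w}\cdots]_{w}\bigr]_{w}\otimes a_{1}\cdots a_{k}.$$
If $k>\wl(Y)$, the iterated Whitehead product on the right vanishes by definition of $\wl(Y)$, so every length-$k$ iterated bracket in $H^{*}_{\rm AQ}(\Lambda V,B;\overline{f})$ is zero. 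Theorem \ref{thm:1.1} then transports this back to $\pi_{*}(\map(X,Y;f))$ to give $\wl(\map(X,Y;f))\leq\wl(Y)$.

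The main obstacle is the Koszul sign bookkeeping in Step 2: one needs to verify that after extracting the scalars and applying graded commutativity of $B$, the signs from $\varepsilon_{12},\varepsilon_{21}$ and from reordering $a,b$ combine to produce precisely the symmetric pairing $\langle v_{1}v_{2};\alpha,\beta\rangle$ of Proposition \ref{w.p}. The coformality hypothesis is essential here: for a general differential $d=d_{1}+d_{\geq 2}$, the higher parts $d_{\geq 2}$ would produce contributions to $[\varphi,\psi](v)$ of arity $>2$ that cannot be packaged as a single $V^{\#}$-bracket tensored with a monomial in $B$.
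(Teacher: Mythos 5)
Your argument is correct in outline, but it takes a genuinely different route from the paper's. The paper never identifies the bracket \eqref{(1.1)} with the Whitehead product of $Y$: it sets $m$ equal to the $d_{1}$-depth of $Y$, observes that under coformality each application of the bracket formula replaces a generator $v$ by the factors of $d_{1}v$, which strictly descend the filtration $V_{0}\subset V_{1}\subset\cdots$ defining the depth, so every iterated bracket of length $m+2$ vanishes at the cochain level, and then converts depth into Whitehead length by quoting Theorem \ref{thm:4.2}. You instead decompose $\Der^{*}(\Lambda V,B;\overline{f})\cong V^{\#}\otimes B$ and show that the chain-level bracket is, up to sign, the tensor product of the dual of $d_{1}$ (identified with the Whitehead product of $Y$ via Proposition \ref{w.p}) with multiplication in $B$, so that vanishing of length-$k$ iterated Whitehead products in $\pi_{\geq 2}(Y)$ kills all length-$k$ iterated brackets directly from the definition of $\wl(Y)$. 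Your route is closer in spirit to the original argument of Lupton--Smith in \cite{LS}, is more structural (it exhibits the bracket on $H_{{\rm AQ}}^{*}(\Lambda V,B;\overline{f})$ as a subquotient of the tensor-product bracket on $\pi_{*}(Y)\otimes\Q\otimes B$), and avoids invoking Theorem \ref{thm:4.2}; the paper's route avoids your one piece of unfinished business, the sign verification in the pure-tensor formula $[\alpha\otimes a,\beta\otimes b]=\pm(\alpha\bullet\beta)\otimes ab$. Be aware that this check is not cosmetic: if the relative sign between the two terms came out wrong, you would obtain the antisymmetrization of $\langle v_{1};\alpha\rangle\langle v_{2};\beta\rangle$ rather than the symmetric pairing $\langle v_{1}v_{2};\alpha,\beta\rangle$ of Proposition \ref{w.p}, and the vanishing conclusion would fail. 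The check does go through; a convenient shortcut for the $V^{\#}$-part is to specialize Theorem \ref{thm:1.1} to $B=\Q$, where $H_{{\rm AQ}}^{-n}(\Lambda V,\Q;\varepsilon)\cong (V^{n})^{\#}\cong\pi_{n}(Y)\otimes\Q$ and the bracket is forced to agree with the Whitehead product of $Y$ itself, leaving only the $(-1)^{|a||b|}$ from reordering $a$ and $b$ to track in the general case.
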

We give another proof of Proposition \ref{prop:4.1} using the bracket defined by Theorem \ref{thm:1.1}. Before proving the proposition, we introduce a numerical invariant which is called the $d_{1}$-depth for a simply-connected space $Z$ and recall the relationship between the Whitehead length and the $d_{1}$-depth.

\begin{defn}{\rm
Let $(\Lambda V,d)$ be a minimal Sullivan model for a simply-connected space $Z$ and $d_{1}$ the quadratic part of $d$. } The $d_{1}$-depth of $Z$, {\rm denoted by $d_{1}$-depth($Z$), is the greatest integer $n$ such that $V_{n-1}$ is a proper subspace of $V_{n}$ with
$$
V_{-1}=0, \ V_{0}=\{v\in V \ | \ d_{1}v=0\} \ \text{and} \ V_{i}=\{v\in V \ | \ d_{1}v\in \Lambda V_{i-1}\} \ (i\geq 1).
$$
}
\end{defn}

\begin{thm}[{\cite[Theorem 4.15]{Kaj2005}\cite[Theorem 2.5]{KY2006}}]\label{thm:4.2}
Let $Y$ be a $\Q$-local, simply-connected space. Then $d_{1}$-{\rm depth}$(Y)+1= \wl (Y)$.
\end{thm}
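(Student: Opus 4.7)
The plan is to show that, under the pairing $\langle\,\cdot\,;\,\cdot\,\rangle$ from Section~2, the $d_{1}$-depth filtration $V_{0}\subset V_{1}\subset\cdots$ of $V$ is precisely dual to the filtration of $\pi_{\geq 2}(Y)\otimes\Q$ by iterated Whitehead products. Write $W_{k}$ for the $\Q$-subspace of $\pi_{\geq 2}(Y)\otimes\Q$ spanned by iterated Whitehead products of length at least $k$; thus $W_{1}=\pi_{\geq 2}(Y)\otimes\Q$, $W_{2}$ is the span of all basic Whitehead products, and so on. I would prove by induction on $i$ that $V_{i}=W_{i+2}^{\perp}$ under the perfect pairing $V^{n}\otimes(\pi_{n}(Y)\otimes\Q)\to\Q$ afforded by the minimal Sullivan model $\Lambda V$ of $Y$. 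The target equality then falls out by dualizing the filtration.

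For the base case $i=0$, $V_{0}=\Ker(d_{1})$. An element $v$ satisfies $d_{1}v=0$ exactly when $\langle d_{1}v;\alpha,\beta\rangle=0$ for every $\alpha,\beta\in\pi_{\geq 2}(Y)\otimes\Q$, which by Proposition~\ref{w.p} is equivalent to $\langle v;[\alpha,\beta]_{w}\rangle=0$ for all such $\alpha,\beta$, i.e.\ $v\in W_{2}^{\perp}$. For the inductive step, suppose $V_{i}=W_{i+2}^{\perp}$. The graded Jacobi identity for rational Whitehead products reduces $W_{i+3}$ to the linear span of left-normed brackets $[\alpha,\omega]_{w}$ with $\alpha\in\pi_{\geq 2}(Y)\otimes\Q$ and $\omega\in W_{i+2}$. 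Iterating Proposition~\ref{w.p} then yields
\[
v\in W_{i+3}^{\perp}\iff \langle d_{1}v;\alpha,\omega\rangle=0\ \text{for all}\ \alpha\in\pi_{\geq 2}(Y)\otimes\Q\ \text{and}\ \omega\in W_{i+2}.
\]
Choosing a basis of $V$ refined so that a sub-basis spans $V_{i}$, the trilinear formula of Section~2 together with the identity $V_{i}^{\perp}=W_{i+2}$ (the dual of the inductive hypothesis under the perfect pairing) converts the right-hand vanishing into the condition $d_{1}v\in\Lambda^{2}V_{i}$, that is $v\in V_{i+1}$.

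Since the pairing is perfect, $V_{i}=W_{i+2}^{\perp}$ forces $V_{i}=V$ if and only if $W_{i+2}=0$. Hence $d_{1}$-$\mathrm{depth}(Y)=n$ means $V_{n}=V$ but $V_{n-1}\subsetneq V$, which translates into $W_{n+2}=0$ and $W_{n+1}\neq 0$; equivalently the longest non-trivial iterated Whitehead product in $\pi_{\geq 2}(Y)$ has length exactly $n+1$. This gives $\wl(Y)=n+1=d_{1}\text{-}\mathrm{depth}(Y)+1$.

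The principal obstacle is the converse direction of the inductive step: recovering $d_{1}v\in\Lambda V_{i}$ from the mere vanishing of the pairings $\langle d_{1}v;\alpha,\omega\rangle$ with $\omega\in W_{i+2}$. This rests on two auxiliary points that require care—first, that the graded Jacobi identity indeed realizes $W_{i+3}$ as the span of left-normed brackets $[\alpha,\omega]_{w}$ (with signs consistent with the graded Lie structure on $\pi_{*}(Y)\otimes\Q$), and second, that the trilinear formula, applied in a basis of $V$ adapted to the filtration $V_{i}\subset V$, produces a clean linear-algebraic identification of the annihilator of $W_{i+2}$ inside $\Lambda^{2}V$.
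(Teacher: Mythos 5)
The paper offers no proof of this statement --- it is imported verbatim from \cite{Kaj2005} and \cite{KY2006} --- so there is no internal argument to compare yours against; I can only judge it on its merits and against the standard proofs in those sources. Your route (dualizing the filtration of $\pi_{\geq 2}(Y)\otimes\Q$ by iterated Whitehead products against the $d_{1}$-filtration of $V$ via Proposition \ref{w.p} and the pairing $V^{n}\otimes\pi_{n}(Y)\to\Q$) is essentially the standard one, and the outline is sound: the two auxiliary points you flag do go through, since left-normalization of iterated brackets is valid in any graded Lie algebra over $\Q$ (hence for the rational Whitehead product), and the identification of the annihilator of $\pi_{*}(Y)\times W_{i+2}$ inside $\Lambda^{2}V$ with $\Lambda^{2}V_{i}$ is elementary linear algebra once one has both $V_{i}=W_{i+2}^{\perp}$ and $V_{i}^{\perp}=W_{i+2}$. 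Two caveats deserve to be made explicit. First, that second identity is a double-annihilator statement, and it, together with the perfectness of the pairing you invoke throughout (in particular for the implication $V_{i}=V\Rightarrow W_{i+2}=0$), requires $Y$ to be of finite type over $\Q$; this hypothesis is absent from the statement as quoted but is in force everywhere else in the paper, and without it the argument breaks. Second, your final dualization tacitly uses that the $d_{1}$-filtration exhausts $V$, i.e.\ $\bigcup_{i}V_{i}=V$; this is true but needs a line of justification (induction on the degree of generators, using $d_{1}(V^{n})\subset(\Lambda^{2}V)^{n+1}$ and simple connectivity, which gives $V^{n}\subset V_{n-2}$). With those two points supplied, the proof is complete.
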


\noindent
{\it Proof of Proposition \ref{prop:4.1}}.
Let $m=d_{1}$-depth$(Y)$. If 
$$0=d^{m+1}(v)=d_{1}^{m+1}(v)=\sum u_{1}u_{2}\cdots u_{s}
$$
for any $v\in V$, then some $u_{i}$ are zero by the definition of $d_{1}$-depth. It follows that, for any $\varphi _{1},\varphi _{2},\dots , \varphi _{m+2}\in H^{\leq -2}_{{\rm AQ}}(\Lambda V,B;\overline{f})$, 
$$
[\varphi _{1},[\varphi _{2},\cdots [\varphi _{m+1},\varphi _{m+2}] \cdots]](v)=0.
$$
Hence, by Theorem \ref{thm:1.1} and Theorem \ref{thm:4.2}, we have $\wl (\map(X,Y;f))\leq m+1 = \wl (Y)$.
\hfill\qed

We next prove Proposition \ref{prop:4.6}.




\noindent
{\it Proof of Proposition \ref{prop:4.6}}.
Let $m=\wl (\map_{*}(X,Y;f))$. If $m=1$, then the assertion is trivial, so we may assume that $m\geq 2$. By Corollary \ref{cor:3.5}, there are elements $\varphi _{1},\varphi _{2},\cdots,\varphi _{m}$ in $H^{\leq -2}_{{\rm AQ}}(\Lambda V,B^{+};\overline{f})$ such that 
$$
[\varphi _{1},[\varphi _{2},\cdots ,[\varphi _{m-1},\varphi _{m}]\cdots ]](v)\neq 0
$$
for some $v\in V$. The formula $(1,1)$ yields that $\nil B\geq m$. Moreover, if $\wl(Y)=1$, then $V=\Ker d_{1}$ by Theorem \ref{thm:4.2}. It means that if $d^{m-1}(v)=\sum v_{1}v_{2}\cdots v_{s}$, then $s\geq (m-2)(\omega -1)+\omega $. Then we see that
$$\nil B\geq (m-2)(\omega -1)+\omega \geq \omega
$$
since $[\varphi _{1},[\varphi _{2},\cdots ,[\varphi _{m-1},\varphi _{m}]\cdots ]](v)\neq 0$. Therefore, we have $m \leq \frac{1}{\omega  -1}(\nil B -1)+1.$
\hfill\qed\\
\begin{rem}\label{rem:4.7}{\rm
Suppose that ${\rm WL}(Y)=1$ and ${\rm WL}(\map_{*} (X,Y;f))>1$. The proof of Proposition \ref{prop:4.6} enables us to conclude that $\nil B\geq \omega$ and that $\omega \geq 3$ since $V=\Ker d_{1}$. Moreover we have
$$
\wl (\map_{*}(X,Y;f))\leq \frac{1}{\omega  -1}(\nil B -1)+1\leq \nil B-1.
$$
Thus our upper bound of the Whitehead length of the mapping space may be less than that described in Theorem \ref{thm:4.5}.
}\end{rem}


\section{Computational examples}
We shall determine the Whitehead length of the mapping space from $\C P^{\infty }\times \C P^{n}$ to $\C P^{\infty }_{\Q}\times \C P^{m}_{\Q}$.
For this, we first compute the homotopy group of the mapping space. Recall that the CDGAs $(\Q [z_{2}],0)$ and $(\Lambda (x_{2},x'_{2n+1}), \ dx'_{2n+1}=x_{2}^{n+1})$ are minimal Sullivan models for $\C P^{\infty}$ and $\C P^{n}$, respectively. Here, $|x_{2}|=|z_{2}|=2$ and $|x_{2n+1}'|=2n+1$. Since $\C P^{m}$ is formal, that is the CDGA map $\rho  $
$$
(\Lambda (x_{2},x'_{2m+1}), \ dx'_{2m+1}=x_{2}^{m+1})\longrightarrow (\Q[y_{2}]/(y_{2}^{m+1}),0)=H^{*}(\C P^{m};\Q)
$$
defined by $\rho (x_{2})=y_{2}$, $\rho (x'_{2m+1})=0$ is a quasi-isomorphism, so the CDGA $(\Q[w_{2}]\otimes \Q[y_{2}]/(y_{2}^{m+1}),0)$ is a model for $\C P^{\infty }\times \C P^{m}$.
\begin{prop}\label{ex:4.4}
Let $k\geq 2$ and $m<n$. Then 
$$
\pi_{k}(\map (\C P^{\infty }\times \C P^{n}, \C P^{\infty }_{\Q}\times \C P^{m}_{\Q}; f))
\hspace{-0.1em}=\hspace{-0.1em}
\left\{
\begin{array}{cl}
\Q &(k=2 \ \text{and} \ q_{2}\neq 0)\\
\Q\oplus \Q & (k=2 \ \text{and} \ q_{2}= 0)\\
\hspace{-0.6em} \displaystyle\bigoplus _{0\leq i=n-m-l+1}^{n-l+1}\hspace{-2em}\Q & (k=2l-1, \ 2\leq l\leq n+1) \\
0&(\text{otherwise})
\end{array}
\right.
$$
Here, $f$ is the realization of the CDGA  map $\overline{f}$
$$
M(\C P^{\infty }\times \C P^{n})=\Q[z_{2}]\otimes \Lambda (x_{2},x_{2n+1}')\to \Q[w_{2}]\otimes \Lambda (x_{2},x_{2m+1}')=M(\C P^{\infty }\times \C P^{m})
$$
defined by
$
\overline{f}(z_{2})=q_{1}(w_{2}\otimes 1), \ \overline{f}(x_{2})= q_{2}(w_{2}\otimes 1) + q_{3}(1\otimes x_{2}) \  \text{and} \ \overline{f}(x_{2n+1}')= 0.
$
for some $q_{1},q_{2},q_{3}\in \Q$.
\end{prop}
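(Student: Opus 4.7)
The plan is to invoke Theorem \ref{thm:2.3} to identify each $\pi_k(\map(X,Y;f))$ with $H^{-k}_{\rm AQ}(\Lambda V,B;\overline{f})$ and then to compute the cohomology of the complex $\Der^{*}(\Lambda V,B;\overline{f})$ by brute force. Because $\Lambda V$ has only three generators $z_2$, $x_2$, $x'_{2n+1}$, an $\overline{f}$-derivation $\theta$ of degree $-k$ is freely determined by its values $\theta(z_2),\theta(x_2)\in B^{2-k}$ and $\theta(x'_{2n+1})\in B^{2n+1-k}$, with no further linear constraint.

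The first observation is that $B=\Q[w_2]\otimes \Q[y_2]/(y_2^{m+1})$ is concentrated in even degrees, so $B^{j}=0$ whenever $j$ is odd or negative. This splits the complex by the parity of $k$: for $k$ even with $k\geq 4$ all three target spaces vanish, giving $\Der^{-k}=0$; for $k$ odd only $\theta(x'_{2n+1})\in B^{2(n-l+1)}$ can survive (where $k=2l-1$); for $k=2$ both $\theta(z_2),\theta(x_2)\in \Q$ contribute while $\theta(x'_{2n+1})\in B^{2n-1}=0$. Counting the monomial basis $\{w_2^a y_2^b:a+b=j,\ 0\leq b\leq m\}$ of $B^{2j}$ gives $\dim B^{2j}=\min(j+1,m+1)$, which directly produces both the asserted dimension at $k=2l-1$ and the range $2\leq l\leq n+1$.

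To pass from the dimensions of $\Der^{-k}$ to the dimensions of cohomology I analyze the differential $\partial\theta=d\theta-(-1)^{|\theta|}\theta d$. Since $d$ is zero on $B$, $(\partial\theta)(v)=\pm\theta(dv)$, which is zero on $z_2$ and $x_2$ and equals $\pm(n+1)\overline{f}(x_2)^{n}\theta(x_2)$ on $x'_{2n+1}$ by the derivation identity. In every odd degree $\theta(x_2)$ already vanishes by parity, so $\theta$ is automatically a cocycle; likewise $\Der^{-k-1}=0$ for such $k$ by parity, so no coboundaries appear and $H^{-k}_{\rm AQ}\cong\Der^{-k}$. This finishes the odd-degree cases.

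The only remaining case is $k=2$, where the cocycle condition reduces to $\overline{f}(x_2)^{n}\theta(x_2)=0$ and the coboundary from $\Der^{-3}$ is again zero (a degree $-3$ derivation has $\theta(x_2)=0$ by parity). The remaining small calculation is to determine when $\overline{f}(x_2)^{n}=(q_2 w_2+q_3 y_2)^{n}$ vanishes in $B$: the hypothesis $n>m$ kills the pure $y_2^{n}$ term, and a direct binomial expansion then shows that the expression vanishes precisely when $q_2=0$; in that case $\Q\oplus\Q$ of cocycles remains, while if $q_2\neq 0$ one forces $\theta(x_2)=0$ and only $\theta(z_2)\in \Q$ survives. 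The main obstacle throughout is purely organizational — keeping track of parities and of the range of indices — rather than anything conceptually new.
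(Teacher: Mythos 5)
Your proposal is correct and follows essentially the same route as the paper: both identify $\pi_k$ with $H^{-k}_{\rm AQ}$ via $\Phi$ and then compute $\Der^{*}(\Lambda V,B;\rho\overline{f})$ directly, using that $B$ is concentrated in even degrees, that odd-degree derivations are automatically cocycles with no coboundaries, and that the cocycle condition in degree $-2$ reduces to $\overline{f}(x_2)^{n}\theta(x_2)=0$ with $(q_2w_2+q_3y_2)^n$ vanishing precisely when $q_2=0$. Your parity bookkeeping and the dimension formula $\dim B^{2j}=\min(j+1,m+1)$ reproduce the paper's index range $n-m-l+1\le i\le n-l+1$ exactly.
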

\begin{proof}
We put $\Der^{n}=\Der ^{n}(\Q[z_{2}]\otimes \Lambda (x_{2},x_{2n+1}'), \ \Q[w_{2}]\otimes \Q[y_{2}]/(y_{2}^{m+1});\rho \overline{f})$ for convenience. For any elements $\theta _{r,s}\in \Der^{-2}$, we may write
$$
\theta_{r,s} (z_{2})=r, \ \theta_{r,s}(x_{2})=s \ \text{and} \ \theta_{r,s}(x'_{2n+1})=0
$$ 
for some $r,s\in \Q$. Then, 
$$
\partial \theta_{r,s}(z_{2})=\partial \theta_{r,s}(x_{2})=0, \ \partial \theta_{r,s}(x'_{2n+1})=-ns(\sum_{i+j=n}q_{2}^{i}q_{3}^{j}w_{2}^{i}\otimes y_{2}^{j}) .
$$
When $q_{2}\neq 0$, we see that $\theta_{r,s} $ is a cycle if and only if $s=0$, that is all cycles of $\Der^{-2}$ generated by $\theta _{1,0}$. When $q_{2}=0$, $\theta_{r,s}(x'_{2n+1})=0$ since $y^{n}_{2}=0$. Hence, $\theta_{1,0}$ and $\theta _{0,1}$ are generators of all cycles of $\Der^{-2}$.
In general, $\Der^{-2l}=0$ for $l\geq 2$ by degree reasons. It follows that
$$
\pi_{2l}(\map (\C P^{\infty }\times \C P^{n}, \C P^{\infty }_{\Q}\times \C P^{m}_{\Q}; f))\cong H^{-2l}(\Der^{*})=0 \ (l\geq 2).
$$
For any $\theta\in \Der ^{-2l+1}$, we may write
$$
\theta (z_{2})=0, \ \theta(x_{2})=0 \ \text{and} \ \theta(x'_{2n+1})=\sum_{i=0}^{n-l+1}r_{i}w_{2}^{i}\otimes y_{2}^{n-l+1-i}.
$$
Note that if $l>n+1$, $\Der^{-2l+1}=0$ by degree reasons. It is easily seen that all elements of $\Der^{-2l+1}$ are cycles. Moreover, we see that $y^{n-l+1-i}=0$ if and only if $0\leq i\leq n-m-l$. Therefore, we have
$$
\pi_{2}(\map (\C P^{\infty }\times \C P^{n}, \C P^{\infty }_{\Q}\times \C P^{m}_{\Q}; f))\cong H^{-2}(\Der^{*}) \hspace{-0.1em} \cong \hspace{-0.1em}
\left\{
\begin{array}{cl}
\hspace{-0.5em}\Q              &\hspace{-0.5em} (k=2 \ \text{and} \ q_{2}\neq 0)\\
\hspace{-0.5em}\Q\oplus \Q &\hspace{-0.5em} (k=2 \ \text{and} \ q_{2}= 0)
\end{array}
\right.
$$
and
$$
\pi_{2l-1}(\map (\C P^{\infty } \times \C P^{n}, \C P^{\infty }_{\Q}\times \C P^{m}_{\Q}; f))
 \cong H^{-2l+1}(\Der^{*})=0 \ (l>n+1).
$$
\end{proof}

\begin{prop}
Let $m<n$. Then one has
$$
\wl (\map (\C P^{\infty }\times \C P^{n}, \C P^{\infty }_{\Q}\times \C P^{m}_{\Q}; f))=
\left\{
\begin{array}{ll}
\hspace{-0.3em}2&\hspace{-0.3em}(n-m=1, \ q_{2}=0, \ q_{3}\neq 0 )\\
\hspace{-0.3em}1&\hspace{-0.3em}(\text{otherwize})
\end{array}
\right.
$$
\end{prop}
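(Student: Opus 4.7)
\noindent
\textit{Proof proposal.} The plan is to apply Theorem~\ref{thm:1.1}, which identifies the Whitehead product on $\pi_{*}(\map(\C P^{\infty}\!\times\!\C P^{n},\C P^{\infty}_{\Q}\!\times\!\C P^{m}_{\Q};f))$ with the bracket \eqref{(1.1)} on $H^{*}_{\rm AQ}(\Lambda V,B;\overline{f})$, and then use the explicit generators found in the proof of Proposition~\ref{ex:4.4}. Writing $\Lambda V=\Q[z_{2}]\otimes\Lambda(x_{2},x'_{2n+1})$ with $dz_{2}=dx_{2}=0$ and $dx'_{2n+1}=x_{2}^{n+1}$, the key observation is that $x'_{2n+1}$ is the only generator on which $d$ is nonzero; hence by \eqref{(1.1)} every nonzero value of a bracket $[\varphi,\psi]$ on a generator occurs at $v=x'_{2n+1}$, and the factors $v_{i}$ in $dv$ are all equal to $x_{2}$. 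A direct expansion then gives
\[
[\varphi,\psi](x'_{2n+1})=\pm(n+1)n\,\varphi(x_{2})\,\psi(x_{2})\,\overline{f}(x_{2})^{n-1},
\]
with all Koszul signs trivial since $|\varphi|,|\psi|,|x_{2}|$ are even.

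The second step is to exploit a degree obstruction. Any $\overline{f}$-derivation $\theta$ of odd negative degree must satisfy $\theta(z_{2})=\theta(x_{2})=0$ because $B=\Q[w_{2}]\otimes\Q[y_{2}]/(y_{2}^{m+1})$ is concentrated in nonnegative even degrees; moreover the same holds for every representative of a class in $H^{-\mathrm{odd}}_{\rm AQ}$, since potential coboundaries $\partial\sigma$ also vanish on $z_{2},x_{2}$ by degree. Combined with the displayed formula, this shows that any bracket $[\varphi,\psi]$ with $\varphi$ or $\psi$ of odd degree is zero. The only classes of degree $-2$ are $\theta_{1,0}$ (always) and $\theta_{0,1}$ (when $q_{2}=0$), and $\theta_{1,0}(x_{2})=0$. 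Therefore the only candidate for a nonvanishing Whitehead product is $[\theta_{0,1},\theta_{0,1}]$, which exists when $q_{2}=0$ and evaluates to $\pm(n+1)n\,q_{3}^{\,n-1}y_{2}^{n-1}$ in $B$. This is nonzero iff $q_{3}\ne 0$ and $y_{2}^{n-1}\ne 0$, i.e.\ $n-1\le m$; together with $m<n$ this forces $n=m+1$. These are exactly the three conditions in the statement.

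The final step is the upper bound $\wl\le 2$. Any length-three iterated bracket has an inner bracket landing in an odd-degree AQ cohomology group (since even $+$ even $+1$, odd $+$ odd $+1$, or even $+$ odd $+1$ followed by $\pi_{\text{even}\ge 4}=0$ from Proposition~\ref{ex:4.4}), and then the outer bracket vanishes by the odd-degree argument of the previous paragraph. Thus $\wl\le 2$ unconditionally, and the computation of Step~2 distinguishes the cases $\wl=2$ and $\wl=1$. The only place that needs some care is the check that every cocycle representative of a class in $H^{-\text{odd}}_{\rm AQ}$ vanishes on $x_{2}$; this is the main structural point, but it follows cleanly from the nonnegative-even concentration of $B$, so I expect no serious obstacle beyond recording the signs and combinatorics in the explicit bracket calculation.
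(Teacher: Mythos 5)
Your proposal is correct and follows essentially the same route as the paper: reduce to the bracket formula \eqref{(1.1)} evaluated on $x'_{2n+1}$ (the only generator with nonzero differential), observe that degree reasons force any bracket involving a class of degree $\leq -3$ to vanish on $x_{2}$ (giving $\wl\leq 2$), and then check $[\theta_{1,0},\theta_{1,0}]=0$ while $[\theta_{0,1},\theta_{0,1}](x'_{2n+1})$ is a nonzero multiple of $q_{3}^{\,n-1}y_{2}^{\,n-1}$, nonzero exactly when $q_{2}=0$, $q_{3}\neq 0$ and $n=m+1$. Your version is if anything slightly more explicit than the paper's (you record the combinatorial coefficient $\pm(n+1)n$, which the paper omits as an irrelevant nonzero constant).
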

\begin{proof}
By the definition of the bracket in $H^{*}(\Der^{*})$, we see that if $\varphi ,\psi \in H^{\leq -3}(\Der^{*})$, then $[\varphi ,\psi ]=0$ since $\varphi (x_{2})=0$ and $\psi (x_{2})=0$. That is $[\varphi ',\psi' ]\neq 0$ means $|\varphi '|=|\psi '|=-2$. It shows that 
$$
\wl(\map (\C P^{\infty }\times \C P^{n}, \C P^{\infty }_{\Q}\times \C P^{m}_{\Q}; f))\leq 2.
$$
If $q_{2}\neq 0$, by Proposition \ref{ex:4.4}, $H^{-2}(\Der^{*})$ is generated by $\theta_{1,0}$. The equality $[\theta_{1,0},\theta_{1,0}]=0$ shows that $\wl(\map (\C P^{\infty }\times \C P^{n}, \C P^{\infty }_{\Q}\times \C P^{m}_{\Q}; f))=1$. On the other hand, if $q_{2}=0$, $\theta _{0,1}$ is a generator of $H^{-2}(\Der^{*})$ and
$$[\theta _{0,1},\theta_{0,1}](x'_{2n+1})=q_{3}^{n-1}y_{2}^{n-1}.
$$
This completes the proof.
\end{proof}

\end{document}